\newtheorem{theorem}{Theorem}
\newtheorem{corollary}[theorem]{Corollary}
\newtheorem{definition}[theorem]{Definition}
\newtheorem{example}[theorem]{Example}
\newtheorem{proposition}[theorem]{Proposition}
\newtheorem{lemma}[theorem]{Lemma}
\newtheorem{remark}[theorem]{Remark}
\newenvironment{proof}[1][Proof]{\noindent\textbf{#1.} }{\ \rule{0.5em}{0.5em}}
\numberwithin{equation}{section} \numberwithin{theorem}{section}
\newcommand{\Q }{\mathbb{Q}}
\newcommand{\Z }{\mathbb{Z}}
\newcommand{\N }{\mathbb{N}}
\newcommand{\W}{{\cal W}}
\newcommand{\E}{{\cal E}}
\begin{document}

\title{\textbf{Soft $\Gamma$-Semirings}}
\author{ \"{O}. Bekta\c{s}, N. Bayrak, B. A. Ersoy \\
obektas@yildiz.edu.tr, nbayrak@yildiz.edu.tr, ersoya@yildiz.edu.tr \\
Yildiz Technical University, Faculty of Arts and Sciences\\
Department of Mathematics, Istanbul, TURKEY}

\maketitle
\begin{abstract}
In this paper, the definitions of soft $\Gamma$-semirings and soft
sub $\Gamma$-semi rings are introduced with the aid of the concept
of soft set theory introduced by Molodtsov. In the mean time, some
of their properties and structural characteristics are
investigated and discussed. Thereafter, several illustrative
examples are given.

\textbf{Keywords}: Soft Sets, Fuzzy Sets, Semirings,
$\Gamma$-ring. \
\end{abstract}

\section{Introduction}\label{Sec1}
Uncertain data modelling was investigated by many researchers in
economics, engineering, environmental sciences, sociology, medical
science and many other fields. The process in classical
mathematics may not be competent owing to the fact that the
assorted uncertainties deriving in these fields. In this context,
mathematical theories such as probability theory, fuzzy set theory
\cite{Zadeh}, rough set theory \cite{Pawlak} were established by
researchers to modelling uncertainties arising in the stated
fields. In 1999, Molodtsov \cite{Molodtsov} made a new viewpoint
of substantial theoretical approaches: the concept of soft set
theory which is more convenient than classical ideologies and can
be seen as a outstanding mathematical tool relates with
uncertainties. After Molodtsov's work, some different applications
of soft sets were studied in \cite {Chen,Maji1,Maji2,Ali}.

The algebraic structure of soft set theories has been studied
progressively in recent years. Akta\c{s} and \c{C}a\u{g}man
\cite{Aktaþ} investigated basic properties of soft sets to the
related concepts of fuzzy sets and rough sets. They also defined
the notion of soft groups, and derived some related properties.
Furthermore, Maji et al. \cite{Maji3,Roy} presented the definition
of fuzzy soft set. The concept of  fuzzy soft groups which is a
generalization of soft groups were given in \cite{Aygun} and
\cite{Manemaran}.In 2010, a tentative approach between fuzzy sets
(rough sets) and soft sets were studied by Feng et al. in
\cite{FengFeng}.

On the other hand soft rings, soft ideals on soft rings and
idealistic soft rings were defined in \cite{Jun1}. After these
studies the notion of fuzzy soft rings and fuzzy soft ideals were
discussed in \cite{Jayanta}. In addition to this in \cite{Kazancý}
the concept of soft BCH-algebra was introduced and some of their
properties and structural characteristics were mentioned.

Furthermore the notion of soft semirings are investigated in
\cite{Feng} which is useful for dealing with problems in different
areas of applied mathematics and information sciences. The
semiring structure provides an algebraic framework for modelling
and investigating the key factors in these problems. Then, N.
Nobusawa \cite{Nobusawa} introduced the notion of $\Gamma$-ring,
as more general than ring. After that, the weakened conditions of
the definition of the $\Gamma$-ring were studied in \cite{Barnes}.
Then the generalization of $\Gamma$-ring and $\Gamma$-semiring
were introduced by \cite{Rao}.

Thereafter, in \cite{Jun2} Jun and Lee studied the concept of
fuzzy $\Gamma$-ring and \cite{Ozturk} defined the soft
$\Gamma$-rings and idealistic soft $\Gamma$-rings with their basic
properties. The extension of the $\Gamma$-semiring to quasi ideals
was done by \cite {Dutta,Jagatap,Chinram} with incompatible style.

In this paper, we introduce the concept of soft $\Gamma$-semiring
which extend the notion of soft $\Gamma$-ring theory and deal with
some of its algebraic properties by giving several examples.

\section{Soft $\Gamma$-Semirings }\label{Sec2}

\begin{definition}
 A pair $\left(\rho,W\right)$ is called a soft set
over $V$, where $\rho$ is a mapping from $W$ to $P\left(V\right)$
\cite{Molodtsov}.

\end{definition}

\begin{definition}
Let  $\left(\rho,W\right),\left(\sigma,Y\right)$ be soft sets over
a common universe $V$.
\begin{itemize}

\item [i)] If $W\subseteq Y$ and
$\rho\left(\omega\right)\subseteq\sigma\left(\omega\right)$  for
all $\omega\in W$ then we say that $\left(\rho,W\right)$ is a soft
subset of $\left(\sigma,Y\right)$,denoted by
$\left(\rho,W\right)\widetilde{\subseteq }\left(\sigma,Y\right)$.

\item [ii)] If $\left(\rho,W\right)$ is a soft subset of
$\left(\sigma,Y\right)$ and $\left(\sigma,Y\right)$ is a soft
subset of $\left(\rho,W\right)$, then we say that
$\left(\rho,W\right)$ is soft equal to $\left(\sigma,Y\right)$,
denoted by $\left(\rho,W\right)=\left(\sigma,Y\right)$ .
\end{itemize}
\end{definition}

\begin{definition}
\begin{itemize}

\item [i)] Let $\left(\rho,W\right)$ and $\left(\sigma,Y\right)$
be two soft set over a common universe $V$.

\[\left(\psi,Z\right)=\left(\rho,W\right)\widetilde{\cap}_\Re\left(\sigma,Y\right)\]

is said to be restricted-intersection of $\left(\rho,W\right)$ and
$\left(\sigma,Y\right)$, where $\left(\psi,Z\right)$ is soft set,
$Z=W\cap Y\neq \emptyset$ and the mapping $\psi$ is defined by

\[\begin{array}{clll}
  \psi:& Z & \rightarrow & P\left(V\right) \\
       & z & \rightarrow & \psi\left(z\right)=\rho \left(z\right)\cap \sigma\left(z\right) \\
\end{array}\]

\item [ii)] Let $\left\{\left(\rho_i,W_i\right): i\in I \right\}$
be non-empty family soft sets. The restricted-intersection of a
non-empty family soft sets is defined by

\[\left(\psi,Y\right)=\left(\widetilde{\cap}_\Re\right)_{i\in I}\left(\rho_i,W_i\right)\]
where $\left(\psi,Y\right)$ is a soft set, $Y=\bigcap_{i\in
I}W_i\neq\emptyset$ and $\psi\left(y\right)=\bigcap_{i\in
I}\rho_i\left(y\right)$ for every $y\in Y$
\cite{Feng},\cite{Ali}.
\end{itemize}
\end{definition}

\begin{definition}
\begin{itemize}
\item [i)] Let $\left(\rho,W\right)$ and
$\left(\sigma,Y\right)$ be two soft set over a common universe
$V$.

\[\left(\psi,Z\right)=\left(\rho,W\right)\widetilde{\cap}_\E\left(\sigma,Y\right)\]

is  called extended-intersection of $\left(\rho,W\right)$ and
$\left(\sigma,Y\right)$, where $\left(\psi,Z\right)$ is soft set
and $\left(\psi,Z\right)$ satisfying the following conditions

\item $Z=W\cup Y$
\item $\psi\left(z\right)=\left \{\begin{array}{llc}
                                                             \rho\left(z\right)                        &, if\; z\in W\backslash Y  \\
                                                             \sigma\left(z\right)                      &, if \; z\in Y\backslash W  \\
                                                             \rho\left(z\right)\cap\sigma\left(z\right)&,if  \; z\in c\in W\cap Y.\\
                                                            \end{array}\right.$

\item [ii)]Let $\left\{\left(\rho_i,W_i\right): i\in I \right\}$
be non-empty family soft sets. The extended-intersection of a
non-empty family soft sets is defined by

\[\left(\psi,Y\right)=\left(\widetilde{\cap}_\E\right)_{i\in I}\left(\rho_i,W_i\right)\]
where $\left(\psi,Y\right)$ is a soft set, $Y=\bigcup_{i\in
I}W_i,\psi\left(y\right)=\bigcap_{i\in I}\rho_i\left(y\right)$ and
$I\left(y\right)= \left\{i:i\in W_i\right\}$ for every $y\in Y$
\cite{Ali},\cite{Feng} .
\end{itemize}
\end{definition}

\begin{definition}
\begin{itemize}
\item [i)] Let $\left(\rho,W\right)$ and $\left(\sigma,Y\right)$
be two soft set over a common universe $V$.

\[\left(\psi,Z\right)=\left(\rho,W\right)\widetilde{\cup}_\Re\left(\sigma,Y\right)\]

is  said to be restricted union of $\left(\rho,W\right)$ and
$\left(\sigma,Y\right)$, where $\left(\psi,Z\right)$ is soft set,
$Z=W\cap Y\neq \emptyset$, and the mapping $\psi$ is defined by

\[\begin{array}{clll}
  \psi:& Z & \rightarrow & P\left(V\right) \\
       & z & \rightarrow & \psi\left(z\right)=\rho \left(z\right)\cup \sigma\left(z\right) \\
\end{array}\] \cite{Ali}.

\item [ii)]Let $\left\{\left(\rho_i,W_i\right): i\in I \right\}$
be non-empty family soft sets. The restricted-union of a non-empty
family soft sets is defined by

\[\left(\psi,Y\right)=\left(\widetilde{\cup}_\Re\right)_{i\in I}\left(\rho_i,W_i\right)\]
where $\left(\psi,Y\right)$ is a soft set, $Y=\bigcap_{i\in
I}W_i\neq\emptyset$ and $\psi\left(y\right)=\bigcup_{i\in
I}\rho_i\left(y\right)$ for every $y\in Y$ \cite{Kazancý}.

\end{itemize}
\end{definition}

\begin{definition}
\begin{itemize}

\item [i)]Let $\left(\rho,W\right)$ and $\left(\sigma,Y\right)$
be two soft set over a common universe $V$.
\[\left(\psi,Z\right)=\left(\rho,W\right)\widetilde{\cup}_\E\left(\sigma,Y\right)\]
is said to be extended union of $\left(\rho,W\right)$ and
$\left(\sigma,Y\right)$ where $\left(\psi,Z\right)$ is a soft set,
$Z=W\cup Y$, and the mapping $\psi$ is defined by
$$\psi\left(z\right)=\left \{\begin{array}{llc}
                                                             \rho\left(z\right)                        &, if  \;z\in W\backslash Y  \\
                                                             \sigma\left(z\right)                      &, if  \;z\in Y\backslash W  \\
                                                             \rho\left(z\right)\cup\sigma\left(z\right)&, if  \;z\in c\in W\cap Y.\\
                                                            \end{array}\right.$$

\item [ii)]Let $\left\{\left(\rho_i,W_i\right): i\in I \right\}$
be non-empty family soft sets. The extended-union of a non-empty
family soft sets is defined by

\[\left(\psi,Y\right)=\left(\widetilde{\cup}_\E\right)_{i\in I}\left(\rho_i,W_i\right)\]
where $\left(\psi,Y\right)$ is a soft set, $Y=\bigcup_{i\in I}W_i,
\psi\left(y\right)=\bigcup_{i\in I}\rho_i\left(y\right)$ and
$I\left(y\right)=\left\{i:i\in W_i\right\}$ for every $y\in Y$
\cite{Ali}.

\end{itemize}
\end{definition}

\begin{definition}
\begin{itemize}
\item [i)] Let $\left(\rho,W\right)$ and $\left(\sigma,Y\right)$
be two soft set over a common universe $V$.

\[\left(\psi,Z\right)=\left(\rho,W\right)\widetilde{\Lambda}\left(\sigma,Y\right)\]

is  called $\Lambda$-intersection of $\left(\rho,W\right)$ and
$\left(\sigma,Y\right)$, where $\left(\psi,Z\right)$ is soft set,
$Z=W\times Y$ and $\psi\left(w,y\right)=\rho
\left(\omega\right)\cap\sigma\left(y\right)$ for every
$\left(\omega,y\right) \in W\times Y$.

\item [ii)]Let $\left\{\left(\rho_i,W_i \right): i\in I\right\}$ be non-empty
family soft sets. The $\Lambda$-intersection of a non-empty family
soft sets is defined by

\[\left(\psi,Y\right)=\widetilde{\Lambda}_{i\in I}\left(\rho_i,W_i\right)\]
where $\left(\psi,Y\right)$ is a soft set, $Y=\Pi_{i\in I}W_i$ and
$\psi\left(y\right)=\bigcap_{i\in I}\rho_i\left(y\right)$ for
every $y=\left(y_i\right)_{i\in I}\in Y$ \cite{Maji2,Feng}.
\end{itemize}
\end{definition}

\begin{definition}
\begin{itemize}
\item [i)] Let $\left(\rho,W\right)$ and $\left(\sigma,Y\right)$
be two soft set over a common universe $V$.

\[\left(\psi,Z\right)=\left(\rho,W\right)\widetilde{\vee}\left(\sigma,Y\right)\]

is  called $\vee$-union of $\left(\rho,W\right)$ and
$\left(\sigma,Y\right)$, where $\left(\psi,Z\right)$ is soft set,
$Z=W\times Y$ and $\psi\left(w,y\right)=\rho
\left(\omega\right)\cup\sigma\left(y\right)$ for every
$\left(\omega,y\right)\in W\times Y$.

\item [ii)]Let $\left\{\left(\rho_i,W_i: i\in I \right)\right\}$ be non-empty
family soft sets. The $\vee$-union of a non-empty family soft sets
is defined by

\[\left(\psi,Y\right)=\widetilde{\vee}_{i\in I}\left(\rho_i,W_i\right)\]
where $\left(\psi,Y\right)$ is a soft set, $Y=\Pi_{i\in I}W_i$ and
$\psi\left(y\right)=\bigcup_{i\in I}\rho_i\left(y\right)$ for
every $y=\left(y_i\right)_{i\in I}\in Y$ \cite{Maji2,Feng}.
\end{itemize}
\end{definition}

\begin{definition}
\begin{itemize}

\item [i)]Let $\left(\rho,W\right)$ and $\left(\sigma,Y\right)$ be two soft
sets over a common universe $V_1$ and $V_2$ respectively. The
cartesian product of two soft sets $\left(\rho,W\right)$ and
$\left(\sigma,Y\right)$ is defined by
\[\left(Z, W\times Y\right)=\left(\rho,W\right)\times
\left(\sigma,Y\right)\] where $\left(Z, W\times Y\right)$ is a
soft set, and $\psi\left(\omega,y\right)=\rho
\left(\omega\right)\times \sigma \left(y\right)$ for every
$\left(w,y\right)\in W\times Y$\cite{Maji2}.

\item [ii)]Let $\left\{\left(\rho_i,W_i\right): i\in I \right\}$ be non-empty
family soft sets over $V_i,i\in I$. The cartesian product of a
non-empty family soft sets $\left\{\left(\rho_i,W_i\right): i\in I
\right\}$ over the universes $V_i$, is defined by

\[\left(\psi,Y\right)=\widetilde{\Pi}_{i\in I}\left(\rho_i,W_i\right)\]
where $\left(\psi,Y\right)$ is a soft set, $Y=\Pi_{i\in I}W_i$ and
$\psi\left(y\right)=\Pi_{i\in I}{\rho_i\left(y\right)}$ for all
$y=\left(y_i\right)_{i\in I}\in Y$ \cite{Kazancý}.
\end{itemize}
\end{definition}

\begin{definition}
\begin{itemize}
\item [i)] Let $\left(\rho,W\right)$
be soft set over a common universe $V$. Then $\left(\rho,W\right)$
is said to be a relative null soft set, denoted by $N_W$, if
$\rho\left(e\right)=\emptyset$ for every $e\in W.$

\item [ii)] $\left(\rho,W\right)$ is said to be relative whole
soft, denoted by $\W_W$, if $\rho\left(e\right)=V$ for every $e\in
W$ \cite{Kazancý}.
\end{itemize}
\end{definition}

\begin{definition}
Let $\left(\rho,W\right)$ and $\left(\sigma,Y\right)$ be two softs
set over a common universe $V_1$ and $V_2$, respectively, and
$f:V_1\rightarrow V_2, g: W\rightarrow Y$ be two functions.
$\left(f,g\right)$ is said to be a soft function from
$\left(\rho,W\right)$ to $\left(\sigma,Y\right)$, denoted by
$\left(f,g\right):\left(\rho,W\right)\rightarrow\left(\sigma,Y\right)$
if the following condition
\[f\left(\rho\left(\omega\right)\right)=\sigma
\left(g\left(\omega\right)\right)\] satisfies for all $w\in W$. If
$f$ and $g$ are injective (resp. surjective, bijective), then we
say that $\left(f,g\right)$ is injective(resp. surjective,
bijective)\cite{Kazancý}.
\end{definition}

\begin{lemma}
Let $\left(\rho,W\right),\left(\sigma,Y\right)$ and
$\left(\psi,Z\right)$  be soft sets over $V_1,V_2$ and $V_3$,
respectively. If
$$\left(f,g\right):\left(\rho,W\right)\rightarrow\left(\sigma,Y\right)$$
and
$$\left(f',g'\right):\left(\sigma,Y\right)\rightarrow\left(\psi,Z\right)$$
are two soft functions, then

$$\left(f'\circ f,g'\circ g\right):\left(\rho,W\right)\rightarrow
\left(\psi,Z\right)$$ is a soft function.
\end{lemma}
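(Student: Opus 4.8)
The plan is to verify the single defining condition of a soft function directly for the composite pair $\left(f'\circ f, g'\circ g\right)$. Recall that, by the definition of soft function, it suffices to show that
\[
\left(f'\circ f\right)\left(\rho\left(\omega\right)\right)=\psi\left(\left(g'\circ g\right)\left(\omega\right)\right)
\]
holds for every $\omega\in W$. Before checking this identity, I would first confirm that the composites make sense as maps: since $f:V_1\rightarrow V_2$ and $f':V_2\rightarrow V_3$, the composite $f'\circ f:V_1\rightarrow V_3$ is well defined, and likewise $g:W\rightarrow Y$ together with $g':Y\rightarrow Z$ gives $g'\circ g:W\rightarrow Z$. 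Thus the candidate pair has the correct domains and codomains to be a soft function from $\left(\rho,W\right)$ to $\left(\psi,Z\right)$.

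The core step is a two-fold application of the soft-function hypotheses. Fix $\omega\in W$. Unfolding the composition gives $\left(f'\circ f\right)\left(\rho\left(\omega\right)\right)=f'\left(f\left(\rho\left(\omega\right)\right)\right)$. Since $\left(f,g\right):\left(\rho,W\right)\rightarrow\left(\sigma,Y\right)$ is a soft function, we have $f\left(\rho\left(\omega\right)\right)=\sigma\left(g\left(\omega\right)\right)$, so the expression equals $f'\left(\sigma\left(g\left(\omega\right)\right)\right)$. Now I would set $y=g\left(\omega\right)$ and invoke the second hypothesis: because $\left(f',g'\right):\left(\sigma,Y\right)\rightarrow\left(\psi,Z\right)$ is a soft function, $f'\left(\sigma\left(y\right)\right)=\psi\left(g'\left(y\right)\right)$ for all $y\in Y$. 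Substituting $y=g\left(\omega\right)$ yields $f'\left(\sigma\left(g\left(\omega\right)\right)\right)=\psi\left(g'\left(g\left(\omega\right)\right)\right)=\psi\left(\left(g'\circ g\right)\left(\omega\right)\right)$. Chaining these equalities produces exactly the required identity, and since $\omega\in W$ was arbitrary, the defining condition holds on all of $W$.

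There is no real obstacle here; the argument is a routine diagram-chase through the definition. The only subtlety worth flagging explicitly is that the intermediate element $g\left(\omega\right)$ must lie in $Y$ in order for the second soft-function condition to be applicable at that point, but this is automatic since $g$ maps $W$ into $Y$ by assumption. Consequently $\left(f'\circ f, g'\circ g\right)$ satisfies the definition and is a soft function, completing the proof.
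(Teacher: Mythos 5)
Your proof is correct: the paper states this lemma without any proof at all (treating it as immediate), and your argument---unfolding $\left(f'\circ f\right)\left(\rho\left(\omega\right)\right)=f'\left(f\left(\rho\left(\omega\right)\right)\right)$ and applying the defining condition $f\left(\rho\left(\omega\right)\right)=\sigma\left(g\left(\omega\right)\right)$ and then $f'\left(\sigma\left(y\right)\right)=\psi\left(g'\left(y\right)\right)$ at $y=g\left(\omega\right)$---is exactly the routine verification the paper leaves implicit. Your explicit check that $g\left(\omega\right)\in Y$, so that the second soft-function condition applies, is the only subtlety, and you handle it correctly.
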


\begin{definition}
Let $\left(\rho,W\right)$ and $\left(\sigma,Y\right)$ be two soft
sets over $V_1$ and $V_2$,respectively, $\left(f,g\right)$ is a
soft function from $\left(\rho,W\right)$ to
$\left(\sigma,Y\right)$ .The image of $\left(\rho,W\right)$ under
the soft function $\left(f,g\right)$, denoted by
$\left(f,g\right)\left(\rho,W\right)=\left(
f\left(\rho\right),Y\right)$, is the soft set over $V_1$ defined
by
$$f\left(\rho\right)\left(y\right)=\left\{\begin{array}{llc}
                                                             V_1{_{g\left(\omega\right)=y}}f\left(\rho\left(\omega\right)\right)               & if  y\in Img \\
                                                             \emptyset                                                        & otherwise  \\
                                                            \end{array}\right.$$

for all $y\in Y$. The pre-image of $\left(\sigma,Y\right)$ under
the soft function $\left(f,g\right)$ denoted by
$\left(f,g\right)^{-1}\left(\sigma,Y\right)=\left(f^{-1}\left(\sigma\right),W\right)$,
is the soft set over $V_1$ defined by
$f^{-1}\left(\sigma\right)\left(\omega\right)=f^{-1}\left(\sigma\left(\rho\left(\omega\right)\right)\right)$
for all $\omega\in W$.

It is clear that $\left(f,g\right)\left(\rho,W\right)$ is a soft
subset of $\left(\sigma,Y\right)$ and $\left(\rho,W\right)$ is a
soft subset of $\left(f,g\right)^{-1}\left(\sigma,Y\right)$. In
particular, if $\rho$ is the identity function on $W$, the soft
sets $\left(f\left(\rho\right),W\right)$ and
$f^{-1}\left(\sigma\right)\left(\omega\right)$ are as given in
\cite{Aktaþ} and \cite{Kazancý}.
\end{definition}

\begin{definition}
Let $S$ and $\Gamma$ be two additive commutative semigroups. Then
$S$ is called a $\Gamma$- semiring if there exists a mapping
$S\times\Gamma\times S$ (images to be denoted by $a \alpha b$ for
all $a,b\in S$ and $\alpha\in \Gamma$) satisfying the following
conditions

\begin{itemize}
\item [i)]$\left(a+b\right)\alpha c=a\alpha c+b \alpha c$

\item [ii)]$a\alpha\left(b+c\right)=a\alpha b+a \alpha c$

\item [iii)]$a\left(\alpha+\beta\right)b=a\alpha b+a \beta b$

\item [iv)]$a\alpha\left(b\beta c \right)b= \left(a \alpha b
\right)\beta c$ for all $a,b,c \in S$ and for all $\alpha,\beta\in
\Gamma$ \cite{Jagatap}.
\end{itemize}
\end{definition}

\begin{example}
Let $\Q$  be set of rational numbers. $\left(S,+\right)$ be the
commutative semigroup of all $2\times 3$ matrices over $\Q$ and
$\left(\Gamma,+\right)$ be commutative semigroup of all $3\times
2$ matrices over $\Q$. Define $W\alpha Y$ usual matrix product of
$W,\alpha$ and $Y$ ; for all $W,Y\in S$ and for all $\alpha\in
\Gamma$. Then $S$ is a $\Gamma$-semiring but not a semiring
\cite{Jagatap}.
\end{example}

\begin{remark}
Let $\N$ be the set of natural numbers and
$\Gamma=\left\{1,2,3\right\}$. Define the mapping
$\N\times\Gamma\times\N\rightarrow\N$ by $a\alpha b= a\cdot \alpha
\cdot b$ (usual product of $a,\alpha$ and $b$); for all $a,b \in
\N,\alpha \in \Gamma $. Then $\N$ is a $\Gamma$-semiring given in
\cite{Chinram}. But $\Gamma$ is not an additive semigroup, hence
it is not a $\Gamma$-semiring according to \cite{Dutta}.
\end{remark}

\begin{example}
Let $\N$  be the set of natural numbers and
$\Gamma=\left\{1,2,3\right\} \left(\N, max\right)$ and
$\left(\Gamma, max\right)$ are commutative semigroups. Define the
mapping $\N\times\Gamma\times\N\rightarrow\N$ , by $a\alpha b= min
\left\{a, \alpha, b\right\}$, for all $a,b \in \N,\alpha\in
\Gamma$. Then $\N$ is a $\Gamma$- semiring \cite{Jagatap}.
\end{example}

\begin{example}
Let $\Q$  be set of rational numbers and $\Gamma=\N$ be the set of
natural numbers $\left(\Q,+\right)$ and $\left(\N,+\right)$ are
commutative semigroups. Define the mapping
$\Q\times\N\times\Q\rightarrow\Q$ by $a\alpha b$ usual product of
$a, \alpha, b;a,b \in \mathbb{Q},\alpha \in \Gamma $. Then $\Q$ is
a $\Gamma$-semiring \cite{Jagatap}.
 \end{example}

\section{Soft $\Gamma$-Semiring }\label{Sec3}

Let $S$ be a nonempty set and a $\Gamma$-semiring. $R$ will allude
to any triplet relation the midst of a component of $S$ and a
component of $\Gamma$ and a component of $S$, that is,
esoterically $R$ is a subset of $S\times\Gamma\times S$. In this
way, a set valued function $\psi:N \rightarrow P\left(S\right)$
can be defined as

\[\psi\left(y\right)=\left\{s\in S: R\left(y, \alpha,s\right),\forall \alpha\in \Gamma\right\}\]

for all $y\in N$. The pair $\left(\psi, N\right)$ is then a soft
set over $S$, which produced from the relation $R$. The set

\[Supp \left(\psi, N, \Gamma\right)= \left\{y\in N: \psi\left(y\right)\neq\emptyset\right\}\]
is called a  support of the soft set $\left(\psi,N,\Gamma\right)$.
The soft set $\left(\psi,N,\Gamma\right)$ non-null if $Supp
\left(\psi, N, \Gamma\right)\neq \emptyset$ \cite{Kazancý,Feng}.

\begin{definition}
A nonempty subset $T$ of $S$ is said to be a sub-$\Gamma$-
semiring of $S$ if $\left(T,+\right)$ is a subsemigroup of
$\left(S,+\right)$ and $a\alpha b\in T$; for all $a,b\in T$ and
for all $\alpha\in \Gamma$ \cite{Jagatap}.
\end{definition}

\begin{definition}
Let  $\left(\psi,N\right)$ be a non-null soft set over a
$\Gamma$-semiring $S$. Then  $\left(\psi,N\right)$ is called a
soft $\Gamma$- semiring over $S$ if $ \psi\left(x\right)$ is a
sub-$\Gamma$-semiring of $S$ for all $y\in Supp
\left(\psi,N\right) $. This denoted by
$\left(\psi,N,\Gamma\right)$.
\end{definition}

\begin{example}
For consider the additively abelian groups
$\Z_8=\left\{0,1,2,3,4,5,6,7\right\}$ and $\Gamma=
\left\{2,4,6\right\}$. Let $\cdot: \mathbb{Z}_8\times \Gamma\times
\Z_8 \rightarrow \Z_8, \left(y, \alpha, s\right)=y\alpha s$.
Therefore we have that
\begin{itemize}

\item [i)] $\left(a+b\right)\alpha c=a\alpha c+b \alpha c$

\item [ii)] $a\alpha\left(b+c\right)=a\alpha b+a \alpha c$

\item [iii)]$a\left(\alpha+\beta\right)b=a\alpha b+a \beta b$

\item [iv)]$a\alpha\left(b\beta c \right)b= \left(a \alpha b
\right)\beta c$ for all $a,b,c \in \mathbb{Z}_8$ and for all
$\alpha,\beta\in \Gamma=\left\{2,4,6\right\}$. Hence $\Z_8$ is a
$\Gamma$- semiring.

Let $N=\Z_8$ and $\psi:N \rightarrow P\left(\Z_8\right)$ be a set
valued function defined by
\[\psi\left(y\right)=\left\{s\in \Z_8 : R\left(y, \alpha,s\right)\leftrightarrow, \left(y, \alpha,s\right)\in \left\{0,4,6\right\}, \forall \alpha\in \Gamma\right\}\]
for all $y\in N=\Z_8$. Then

$$\psi\left(0\right)=\psi\left(2\right)=\psi\left(4\right)=\psi\left(6\right)=\Z_8$$
$$\psi\left(1\right)=\psi\left(3\right)=\psi\left(5\right)=\psi\left(7\right)=\left\{0,2,4,6\right\}$$
 are sub-$\Gamma$-semirings of $\Z_8$. Hence
 $\left(\psi,N\right)$ is a soft-$\Gamma$- semiring over
 $\mathbb{Z}_8$.
\end{itemize}
\end{example}

\begin{proposition}
Let $\left(\rho,W,\Gamma\right)$ and
$\left(\sigma,W,\Gamma\right)$ be soft $\Gamma$ semirings over
$\Gamma$-semiring $S$. The restricted intersection
$\left(\rho,W,\Gamma\right)\widetilde{\cap}_\Re\left(\sigma,W,\Gamma\right)$
is a soft $\Gamma$semiring over $S$ if it is non-null.
\end{proposition}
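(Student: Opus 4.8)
The plan is to show that the restricted intersection $(\psi,W,\Gamma) = (\rho,W,\Gamma)\widetilde{\cap}_\Re(\sigma,W,\Gamma)$ is a soft $\Gamma$-semiring directly from the definitions. Since both soft sets share the same parameter set $W$, the restricted intersection has parameter set $Z = W \cap W = W$, and the mapping is given by $\psi(\omega) = \rho(\omega) \cap \sigma(\omega)$ for every $\omega \in W$. The goal reduces to verifying that $\psi(\omega)$ is a sub-$\Gamma$-semiring of $S$ for every $\omega \in \mathrm{Supp}(\psi,W)$.

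First I would handle the support bookkeeping. By hypothesis the intersection is non-null, so $\mathrm{Supp}(\psi,W) \neq \emptyset$. I would observe that if $\omega \in \mathrm{Supp}(\psi,W)$, then $\psi(\omega) = \rho(\omega) \cap \sigma(\omega) \neq \emptyset$, which forces both $\rho(\omega) \neq \emptyset$ and $\sigma(\omega) \neq \emptyset$; hence $\omega \in \mathrm{Supp}(\rho,W)$ and $\omega \in \mathrm{Supp}(\sigma,W)$. Because $(\rho,W,\Gamma)$ and $(\sigma,W,\Gamma)$ are soft $\Gamma$-semirings, it follows that $\rho(\omega)$ and $\sigma(\omega)$ are each sub-$\Gamma$-semirings of $S$. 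So the problem is reduced to a purely algebraic fact.

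The algebraic core is to show that the intersection of two sub-$\Gamma$-semirings of $S$ is again a sub-$\Gamma$-semiring. Fix $\omega \in \mathrm{Supp}(\psi,W)$ and write $T = \psi(\omega) = \rho(\omega) \cap \sigma(\omega)$. I would verify the two defining conditions from the definition of sub-$\Gamma$-semiring: closure under the additive operation and closure under the $\Gamma$-multiplication. For any $a,b \in T$ we have $a,b \in \rho(\omega)$ and $a,b \in \sigma(\omega)$; since each is a subsemigroup of $(S,+)$, we get $a+b \in \rho(\omega)$ and $a+b \in \sigma(\omega)$, whence $a+b \in T$. Similarly, for $a,b \in T$ and $\alpha \in \Gamma$, closure of each sub-$\Gamma$-semiring gives $a\alpha b \in \rho(\omega)$ and $a\alpha b \in \sigma(\omega)$, so $a\alpha b \in T$. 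Thus $T$ is a sub-$\Gamma$-semiring, and since $\omega$ was arbitrary in the support, $(\psi,W,\Gamma)$ is a soft $\Gamma$-semiring over $S$.

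This argument is entirely routine, so I do not anticipate a genuine obstacle; the only point requiring mild care is the support argument, namely confirming that membership of $\omega$ in $\mathrm{Supp}(\psi,W)$ guarantees membership in the supports of both $\rho$ and $\sigma$, so that the soft $\Gamma$-semiring hypotheses can actually be applied to the relevant parameters. The non-nullity assumption in the statement is precisely what ensures the support is non-empty and the conclusion is not vacuous.
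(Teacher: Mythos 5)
Your proof is correct and follows essentially the same route as the paper's: reduce via the definition of restricted intersection to showing $\psi(\omega)=\rho(\omega)\cap\sigma(\omega)$ is a sub-$\Gamma$-semiring for each $\omega$ in the support. The only difference is that you explicitly verify closure of the intersection under $+$ and under $\Gamma$-multiplication, a step the paper asserts without detail, so your write-up is if anything more complete.
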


\begin{proof}
By Definition 2.3 (i), we have that
$\left(\rho,W,\Gamma\right)\widetilde{\cap}_\Re\left(\sigma,W,\Gamma\right)=\left(\psi,W,\Gamma\right)$
where $\psi\left(\omega\right)=\rho\left(\omega\right)\cap
\sigma\left(\omega\right)$ for all $\omega\in W$. We assume that
$\left(\psi,W,\Gamma\right)$ is a non-null soft set over $S$. If
$\omega\in  Supp \left(\psi,W,\Gamma\right)$, then
$\psi\left(\omega\right)=\rho\left(\omega\right)\cap
\sigma\left(\omega\right)\neq \emptyset$. We know that
$\left(\rho,W,\Gamma\right)$ and $\left(\sigma,W,\Gamma\right)$
are both soft $\Gamma$ semirings over $S$, and so, the nonempty
sets $\rho\left(\omega\right)$ and $\sigma\left(\omega\right)$ are
both sub $\Gamma$ semiring of $S$ (From definition 3.2). Thus,
$\psi\left(w\right)$ is a sub $\Gamma$-semiring of $S$ for all
$\omega\in Supp \left(\psi,W,\Gamma\right)$. In this position,
$\left(\psi,W,\Gamma\right)=
\left(\rho,W,\Gamma\right)\widetilde{\cap}_\Re\left(\sigma,W,\Gamma\right)$
is a soft $\Gamma$ semiring over $S$
\end{proof}

\begin{corollary}
Let $ \left\{\left(\rho_i,W,\Gamma\right):i\in I\right\}$ be a
nonempty family of soft $\Gamma$-semiring over $S$. Then the
restricted intersection $\left(\widetilde{\cap}_\Re\right)_{i\in
I}\left(\rho_i,W,\Gamma\right)$ is a soft $\Gamma$-semiring over
$S$ if it is non-null.
\end{corollary}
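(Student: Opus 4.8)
The plan is to mirror the argument of the preceding Proposition, promoting it from the binary to the arbitrary non-empty family case. The single extra ingredient needed is that an arbitrary (non-empty) intersection of sub-$\Gamma$-semirings of $S$ is again a sub-$\Gamma$-semiring of $S$, which is the $\Gamma$-semiring analogue of the familiar fact that an intersection of subgroups is a subgroup.

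First I would invoke Definition 2.3 (ii) to name the object under study: since every member of the family carries the same parameter set $W$, the restricted intersection is the soft set $\left(\psi,W,\Gamma\right)=\left(\widetilde{\cap}_\Re\right)_{i\in I}\left(\rho_i,W,\Gamma\right)$ with $Y=\bigcap_{i\in I}W=W$ and $\psi\left(\omega\right)=\bigcap_{i\in I}\rho_i\left(\omega\right)$ for every $\omega\in W$. By hypothesis $\left(\psi,W,\Gamma\right)$ is non-null, so I would fix an arbitrary $\omega\in Supp\left(\psi,W,\Gamma\right)$, which gives $\psi\left(\omega\right)=\bigcap_{i\in I}\rho_i\left(\omega\right)\neq\emptyset$. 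A quick bookkeeping remark is worth recording here: because $\psi\left(\omega\right)\subseteq\rho_i\left(\omega\right)$ for each $i$, the non-emptiness of $\psi\left(\omega\right)$ forces $\rho_i\left(\omega\right)\neq\emptyset$, i.e. $\omega\in Supp\left(\rho_i,W,\Gamma\right)$ for every $i\in I$; hence Definition 3.2 applies to each factor and tells us that every $\rho_i\left(\omega\right)$ is a sub-$\Gamma$-semiring of $S$.

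The core step is then to verify that the non-empty set $\psi\left(\omega\right)=\bigcap_{i\in I}\rho_i\left(\omega\right)$ satisfies the two requirements of Definition 3.1. For closure under addition, take $a,b\in\psi\left(\omega\right)$; then $a,b\in\rho_i\left(\omega\right)$ for all $i$, and since each $\rho_i\left(\omega\right)$ is a subsemigroup of $\left(S,+\right)$ we get $a+b\in\rho_i\left(\omega\right)$ for every $i$, whence $a+b\in\psi\left(\omega\right)$. For closure under the $\Gamma$-operation, the same membership argument applied to $a\alpha b$ shows $a\alpha b\in\rho_i\left(\omega\right)$ for all $i$ and all $\alpha\in\Gamma$, so $a\alpha b\in\psi\left(\omega\right)$. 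Thus $\psi\left(\omega\right)$ is a sub-$\Gamma$-semiring of $S$ for every $\omega\in Supp\left(\psi,W,\Gamma\right)$, and by Definition 3.2 the soft set $\left(\widetilde{\cap}_\Re\right)_{i\in I}\left(\rho_i,W,\Gamma\right)$ is a soft $\Gamma$-semiring over $S$.

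I do not anticipate a genuine obstacle, as the verification reduces to elementwise closure that passes through an arbitrary intersection without difficulty. The only points demanding care are formal rather than substantive: ensuring the common parameter set makes $Y=\bigcap_{i\in I}W=W$ so that $\psi$ is defined on all of $W$, and the support-propagation observation guaranteeing that each factor is genuinely a sub-$\Gamma$-semiring at the chosen $\omega$ so that Definition 3.2 is legitimately applicable to every $\rho_i$. Both are immediate, so the argument is essentially the Proposition's proof with $\bigcap_{i\in I}$ in place of the two-fold intersection.
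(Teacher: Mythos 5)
Your proof is correct and follows exactly the route the paper intends: the paper's own proof of this corollary is just ``Straightforward,'' since it is the special case $W_i = W$ of Theorem 3.6, whose proof proceeds precisely as you do (restricted intersection via Definition 2.3(ii), support non-emptiness propagating to each $\rho_i(\omega)$, and intersection of sub-$\Gamma$-semirings being a sub-$\Gamma$-semiring). If anything, you are more careful than the paper, which asserts the closure of the intersection without the elementwise verification you supply.
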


\begin{proof}
Straight forward
\end{proof}

\begin{theorem}
Let $\left(\rho_i,W_i,\Gamma\right)_{i\in I}$ be a nonempty family
of soft-$\Gamma$-semirings over $S$. Then the restricted
intersection $\left(\widetilde{\cap}_\Re\right)_{i\in
I}\left(\rho_i,W_i,\Gamma\right)$ is a soft $\Gamma$-semiring over
$S$ if it is non-null.
\end{theorem}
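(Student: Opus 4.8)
The plan is to mirror the proof of the two-set Proposition 3.4 and its Corollary 3.5, but now allow the parameter sets $W_i$ to differ, which forces me to invoke the family version of restricted intersection in Definition 2.3(ii). First I would write $\left(\widetilde{\cap}_\Re\right)_{i\in I}(\rho_i,W_i,\Gamma)=(\psi,Y,\Gamma)$, where by Definition 2.3(ii) the parameter set is $Y=\bigcap_{i\in I}W_i\neq\emptyset$ and the set-valued map is given by $\psi(y)=\bigcap_{i\in I}\rho_i(y)$ for each $y\in Y$. By hypothesis this soft set is non-null, so it suffices to prove that $\psi(y)$ is a sub-$\Gamma$-semiring of $S$ for every $y\in Supp(\psi,Y,\Gamma)$.

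Next I would fix such a $y$, so that $\psi(y)=\bigcap_{i\in I}\rho_i(y)\neq\emptyset$. The crucial preliminary observation is that a nonempty intersection forces every factor to be nonempty: $\rho_i(y)\neq\emptyset$ for all $i\in I$, and hence $y\in Supp(\rho_i,W_i,\Gamma)$ for each $i$. Since every $(\rho_i,W_i,\Gamma)$ is a soft $\Gamma$-semiring, Definition 3.2 then guarantees that each $\rho_i(y)$ is a sub-$\Gamma$-semiring of $S$. This is the step that lets me apply the hypothesis to all members of the family simultaneously.

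The main step is to verify that an arbitrary intersection of sub-$\Gamma$-semirings is again a sub-$\Gamma$-semiring. Take $a,b\in\psi(y)$ and $\alpha\in\Gamma$; then $a,b\in\rho_i(y)$ for every $i$. Because each $\rho_i(y)$ is a subsemigroup of $(S,+)$, we have $a+b\in\rho_i(y)$ for all $i$, hence $a+b\in\bigcap_{i\in I}\rho_i(y)=\psi(y)$; and because each $\rho_i(y)$ is closed under the $\Gamma$-action, we have $a\alpha b\in\rho_i(y)$ for all $i$, hence $a\alpha b\in\psi(y)$. By Definition 3.1 this shows $\psi(y)$ is a sub-$\Gamma$-semiring of $S$, so $(\psi,Y,\Gamma)$ is a soft $\Gamma$-semiring over $S$, completing the argument.

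I expect no serious obstacle: the reasoning is essentially that of Proposition 3.4, and the only point demanding genuine care is the support bookkeeping, namely the implication that $\psi(y)\neq\emptyset$ forces every $\rho_i(y)\neq\emptyset$. Passing from a finite to an arbitrary index set $I$ is harmless, since closure under $+$ and under the $\Gamma$-action is checked elementwise and is therefore preserved under intersections of any cardinality.
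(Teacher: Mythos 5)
Your proposal is correct and follows essentially the same route as the paper's proof of this theorem: invoke Definition 2.3(ii) to identify $Y=\bigcap_{i\in I}W_i$ and $\psi(y)=\bigcap_{i\in I}\rho_i(y)$, observe that $\psi(y)\neq\emptyset$ forces each $\rho_i(y)\neq\emptyset$ so that Definition 3.2 applies to every member of the family, and conclude that $\psi(y)$ is a sub-$\Gamma$-semiring for all $y\in Supp\left(\psi,Y,\Gamma\right)$. The only difference is that you explicitly verify closure of the intersection under $+$ and the $\Gamma$-action, a step the paper asserts without proof ("that is, $\psi(y)$ is a sub-$\Gamma$-semiring"), so your write-up is if anything slightly more complete.
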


\begin{proof}
From definition 2.3(ii), we have that
$\left(\widetilde{\cap}_\Re\right)_{i\in
I}\left(\rho_i,W_i,\Gamma\right)= \left(\psi,Y, \Gamma\right)$,
where $Y=\bigcap_{i\in I} W_i\neq \emptyset$, and
$\psi\left(y\right)=\bigcap _{i\in I} \rho_i \left(y\right)$ for
every $y\in Y$.

We assume that $\left(\psi,Y, \Gamma\right)$ is non-null. Let
$y\in Supp \left(\psi,Y, \Gamma\right)$. Then $\psi
\left(y\right)\neq\emptyset$ and so we have
$\rho_i\left(y\right)\neq\emptyset$ for every $i\in I$. From the
hypothesis, we know that
$\left\{\left(\rho_i,W_i,\Gamma\right):i\in I\right\}$ is a
nonempty family of soft-$\Gamma$-semiring over $S$, by definition
3.2 $\rho_i \left(y\right)$ is a sub-$\Gamma$-semiring of $S$,
that is, $\psi\left(y\right)$ is a sub-$\Gamma$-semiring of $S$
for all $y\in Supp \left(\psi,Y, \Gamma\right) $ and so
$\left(\psi,Y, \Gamma\right)$ is a soft $\Gamma$ semiring over
$S$.
\end{proof}

\begin{theorem}
Let $ \left\{\left(\rho_i,W_i,\Gamma\right):i\in I\right\}$ be a
nonempty family of soft $\Gamma$-semiring over $S$. Then the
extended intersection $\left(\widetilde{\cap}_\E\right)_{i\in
I}\left(\rho_i,W_i,\Gamma\right)$ is a soft $\Gamma$-semirings
over $S$.
\end{theorem}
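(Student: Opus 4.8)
The plan is to follow the same template as the restricted-intersection results, adjusting only for the fact that the extended intersection is indexed over the \emph{union} of the parameter sets. First I would unwind Definition 2.4(ii): writing $(\widetilde{\cap}_\E)_{i\in I}(\rho_i,W_i,\Gamma)=(\psi,Y,\Gamma)$, we have $Y=\bigcup_{i\in I}W_i$ and, for each $y\in Y$, $\psi(y)=\bigcap_{i\in I(y)}\rho_i(y)$ with $I(y)=\{i\in I:y\in W_i\}$. By Definition 3.2 it then suffices to show that $\psi(y)$ is a sub-$\Gamma$-semiring of $S$ for every $y\in Supp(\psi,Y,\Gamma)$.

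Next I would fix an arbitrary $y\in Supp(\psi,Y,\Gamma)$, so that $\psi(y)=\bigcap_{i\in I(y)}\rho_i(y)\neq\emptyset$. For each $i\in I(y)$ I note that $y\in W_i$ and $\rho_i(y)\supseteq\psi(y)\neq\emptyset$, hence $y\in Supp(\rho_i,W_i,\Gamma)$; since $(\rho_i,W_i,\Gamma)$ is a soft $\Gamma$-semiring, Definition 3.2 guarantees that $\rho_i(y)$ is a sub-$\Gamma$-semiring of $S$ for every $i\in I(y)$.

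The remaining ingredient is the elementary fact that a non-empty intersection of sub-$\Gamma$-semirings is a sub-$\Gamma$-semiring. Appealing to Definition 3.1, I would check directly that whenever $a,b\in\bigcap_{i\in I(y)}\rho_i(y)$ both $a+b$ and $a\alpha b$ (for every $\alpha\in\Gamma$) belong to each $\rho_i(y)$, and hence to the intersection; combined with the non-emptiness supplied by the support, this shows $\psi(y)$ is a sub-$\Gamma$-semiring. Since $y$ was arbitrary in the support, $(\psi,Y,\Gamma)$ is a soft $\Gamma$-semiring over $S$.

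I do not anticipate a real obstacle, since the algebraic core is identical to the restricted-intersection case; the only points demanding care are organisational. In particular, the intersection must be taken over the local index set $I(y)$ rather than all of $I$, because a given $y$ may lie in only some of the $W_i$, and for each such $i$ one must first verify $y\in Supp(\rho_i,W_i,\Gamma)$ before invoking the sub-$\Gamma$-semiring property. Finally, although the statement omits the ``if it is non-null'' clause of the earlier results, this causes no trouble: the sub-$\Gamma$-semiring condition is only required on the support, where $\psi$ is automatically non-empty.
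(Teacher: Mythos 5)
Your proposal is correct and takes essentially the same route as the paper's proof: unwind Definition 2.4(ii), fix $y\in Supp\left(\psi,Y,\Gamma\right)$, invoke Definition 3.2 to see that each relevant $\rho_i\left(y\right)$ is a sub-$\Gamma$-semiring, and conclude that their non-empty intersection is one as well. If anything you are more careful than the paper, which writes the intersection over all of $I$ rather than the local index set $I\left(y\right)$ and merely asserts the closure step that you verify explicitly.
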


\begin{proof}
From definition 2.4 (ii), we have that
$\left(\widetilde{\cap}_\E\right)_{i\in
I}\left(\rho_i,W_i,\Gamma\right)=\left(\psi,Y, \Gamma\right)$
where $Y=\bigcup_{i\in I} W_i$, and
$\psi\left(y\right)=\bigcap_{i\in I}\rho_i\left(y\right)$ for all
$y\in Y$.

Assume that $y\in Supp \left(\psi,Y, \Gamma\right)$. Then
$\psi\left(y\right)\neq \emptyset$ and so we have
$\rho_i\left(y\right)\neq \emptyset$ for every $i\in I$. Because
of the fact that$ \left\{\left(\rho_i,W_i,\Gamma\right):i\in
I\right\}$ is a soft $\Gamma$-semiring over $S$ for every $i\in
I$, we have that $\rho_i\left(y\right)$ is a sub $\Gamma$-semiring
over $S$ for every $i\in I$. It follows that
$\psi\left(y\right)=\bigcap_{i\in I}\rho_i\left(y\right)$ is a
sub-$\Gamma$-semiring over $S$ for every $y\in \left(\psi,Y,
\Gamma\right)$. Thus, $\left(\widetilde{\cap}_\E\right)_{i\in
I}\left(\rho_i,W_i,\Gamma\right)$ is a soft-$\Gamma$-semiring over
$S$.
\end{proof}

\begin{theorem}
Let $ \left\{\left(\rho_i,W_i,\Gamma\right):i\in I\right\}$ be a
nonempty family of soft $\Gamma$-semirings over $S$. If
$\rho_i\left(y_i\right)\subseteq \rho_j\left(y_j\right)$ or
$\rho_j\left(y_j\right)\subseteq \rho_i\left(y_i\right) $ for all
$i,j \in I, y_i \in W_i$ then the restricted union
$\left(\widetilde{\cup}_\Re\right)_{i\in
I}\left(\rho_i,W_i\right)$ is a soft-$\Gamma$-semiring over $S$.
\end{theorem}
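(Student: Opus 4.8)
The plan is to follow the template of the preceding theorems: unfold the restricted union by Definition 2.5 (ii), localize at a point of the support, and check the two sub-$\Gamma$-semiring axioms of Definition 3.1 there. Writing $\left(\widetilde{\cup}_\Re\right)_{i\in I}\left(\rho_i,W_i\right)=\left(\psi,Y\right)$, we have $Y=\bigcap_{i\in I}W_i\neq\emptyset$ and $\psi\left(y\right)=\bigcup_{i\in I}\rho_i\left(y\right)$ for every $y\in Y$. I would fix $y\in Supp\left(\psi,Y\right)$ (assuming, as in the earlier results, that the union is non-null), so that $\psi\left(y\right)=\bigcup_{i\in I}\rho_i\left(y\right)\neq\emptyset$, and then show that this union is closed under $+$ and under the triple product $a\alpha b$ for $\alpha\in\Gamma$.

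The crux is this closure step, and it is exactly where the comparability hypothesis is used. Take $a,b\in\psi\left(y\right)$ and $\alpha\in\Gamma$; then $a\in\rho_i\left(y\right)$ and $b\in\rho_j\left(y\right)$ for some $i,j\in I$. Since $y\in Y\subseteq W_i\cap W_j$, the hypothesis (with $y_i=y_j=y$) yields $\rho_i\left(y\right)\subseteq\rho_j\left(y\right)$ or $\rho_j\left(y\right)\subseteq\rho_i\left(y\right)$, so both $a$ and $b$ lie in a single one of the two sets --- say $\rho_j\left(y\right)$. As $\rho_j\left(y\right)$ then contains $a$ and $b$ it is nonempty, hence $y\in Supp\left(\rho_j,W_j\right)$; and since $\left(\rho_j,W_j,\Gamma\right)$ is a soft $\Gamma$-semiring, Definition 3.2 makes $\rho_j\left(y\right)$ a sub-$\Gamma$-semiring of $S$. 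Consequently $a+b\in\rho_j\left(y\right)\subseteq\psi\left(y\right)$ and $a\alpha b\in\rho_j\left(y\right)\subseteq\psi\left(y\right)$.

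With both closure conditions established and $\psi\left(y\right)\neq\emptyset$, Definition 3.1 gives that $\psi\left(y\right)$ is a sub-$\Gamma$-semiring of $S$ for every $y\in Supp\left(\psi,Y\right)$, which by Definition 3.2 is precisely the assertion that $\left(\psi,Y\right)=\left(\widetilde{\cup}_\Re\right)_{i\in I}\left(\rho_i,W_i\right)$ is a soft $\Gamma$-semiring over $S$. The genuine obstacle --- and the whole reason the chain hypothesis appears --- is that a union of two sub-$\Gamma$-semirings is not a sub-$\Gamma$-semiring in general, just as a union of two subgroups is rarely a subgroup; the comparability assumption collapses the two possibly distinct pieces $\rho_i\left(y\right)$ and $\rho_j\left(y\right)$ into one, allowing the single-index soft $\Gamma$-semiring property of $\rho_j$ to finish the verification. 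Everything else is routine and parallels the earlier proofs in this section.
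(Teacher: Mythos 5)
Your proof is correct and follows essentially the same route as the paper's: unfold Definition 2.5~(ii), fix $y\in Supp\left(\psi,Y,\Gamma\right)$, and invoke the comparability hypothesis to conclude $\bigcup_{i\in I}\rho_i\left(y\right)$ is a sub-$\Gamma$-semiring of $S$. In fact you go a step further than the paper, which simply asserts this closure ``by assumption,'' whereas you verify it explicitly by placing any two elements $a,b$ of the union into a single comparable piece $\rho_j\left(y\right)$ and using its closure under $+$ and $a\alpha b$.
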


\begin{proof}
Using definition 2.5 (ii), we have that
$\left(\widetilde{\cup}_\Re\right)_{i\in
I}\left(\rho_i,W_i,\Gamma\right)=\left(\psi,Y, \Gamma\right)$
where $Y=\bigcap_{i\in I} W_i$, and
$\psi\left(y\right)=\bigcup_{i\in I}\rho_i\left(y\right)$ for all
$y\in Y$.
Assume that $y\in Supp \left(\psi,Y, \Gamma\right)$.
Then $\psi\left(y\right)\neq \emptyset$ and so we have
$\rho_{i_0}\left(y\right)\neq \emptyset$ for some $i_0\in
I\left(y\right)$. By assumption, $\bigcup_{i\in
I}\rho_i\left(y\right)$ is a sub $\Gamma$-semiring of $S$ for
every $y\in Supp \left(\psi,Y, \Gamma\right)$. Hence,
$\left(\widetilde{\cup}_\Re\right)_{i\in
I}\left(\rho_i,W_i,\Gamma\right)$ is a soft-$\Gamma$-semiring over
$S$.
\end{proof}

\begin{theorem}
Let $ \left\{\left(\rho_i,W_i,\Gamma\right):i\in I\right\}$ be a
nonempty family of soft $\Gamma$-semiring over $S$. Let $W_i$ and
$W_j$ be members of the family $\left\{W_i: i\in I\right\}$ such
that $W_i\cap W_j= \emptyset$ for $i\neq j$. Then
$\left(\widetilde{\cup}_\E\right)_{i\in
I}\left(\rho_i,W_i,\Gamma\right)$ is a soft-$\Gamma$-semiring over
$S$.
\end{theorem}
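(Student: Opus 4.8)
The plan is to unwind the definition of the extended union and then exploit the disjointness hypothesis to collapse each value of the resulting set-valued map onto a single one of the sets $\rho_i(y)$. Following Definition 2.6(ii), I would first write $(\widetilde{\cup}_\E)_{i\in I}(\rho_i,W_i,\Gamma)=(\psi,Y,\Gamma)$, where $Y=\bigcup_{i\in I}W_i$ and, for each $y\in Y$, $\psi(y)=\bigcup_{i\in I(y)}\rho_i(y)$ with $I(y)=\{i\in I: y\in W_i\}$.

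The crucial step is to observe that the hypothesis $W_i\cap W_j=\emptyset$ for $i\neq j$ forces $I(y)$ to be a singleton for every $y\in Y$: each element of $\bigcup_{i\in I}W_i$ lies in exactly one of the parameter sets. Hence the union defining $\psi(y)$ reduces to a single term, namely $\psi(y)=\rho_{i_0}(y)$, where $i_0$ is the unique index with $y\in W_{i_0}$.

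With this reduction in hand the remainder is routine. For any $y\in Supp(\psi,Y,\Gamma)$ I would note $\psi(y)\neq\emptyset$, so $\rho_{i_0}(y)=\psi(y)\neq\emptyset$ and therefore $y\in Supp(\rho_{i_0},W_{i_0},\Gamma)$. Since $(\rho_{i_0},W_{i_0},\Gamma)$ is a soft $\Gamma$-semiring over $S$, Definition 3.2 yields that $\rho_{i_0}(y)$ is a sub-$\Gamma$-semiring of $S$; thus $\psi(y)$ is a sub-$\Gamma$-semiring of $S$ for every $y$ in the support, which is exactly the condition needed to conclude that $(\psi,Y,\Gamma)$ is a soft $\Gamma$-semiring over $S$.

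The only genuine subtlety I anticipate is pinpointing the role of disjointness, which is precisely what separates this statement from the preceding restricted-union theorem. In that theorem the union $\bigcup_{i\in I}\rho_i(y)$ could combine several sub-$\Gamma$-semirings simultaneously, and a union of sub-$\Gamma$-semirings need not be closed under $+$ or under the $\Gamma$-multiplication, so a nesting (chain) condition on the $\rho_i(y)$ was imposed to restore closure. Here disjointness of the parameter sets removes this obstacle at the source by guaranteeing that two distinct $\rho_i(y)$ are never merged; consequently no closure computation is required and no ordering hypothesis is needed.
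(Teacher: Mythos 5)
Your proposal is correct and follows essentially the same route as the paper: both unwind Definition 2.6(ii) and use the pairwise disjointness of the $W_i$ to collapse $\psi(y)$ to the single set $\rho_{i_0}(y)$ for the unique $i_0$ with $y\in W_{i_0}$, which is a sub-$\Gamma$-semiring of $S$ whenever nonempty. The only point the paper makes that you leave implicit is that $(\psi,Y,\Gamma)$ is automatically non-null, since $Supp\left(\psi,Y,\Gamma\right)=\bigcup_{i\in I}Supp\left(\rho_i,W_i,\Gamma\right)$ and each member of the family is non-null by Definition 3.2.
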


\begin{proof}
From definition 2.6 (ii) we have that where
$\left(\widetilde{\cup}_\E\right)_{i\in
I}\left(\rho_i,W_i,\Gamma\right)=\left(\psi,Y, \Gamma\right)$
$\psi\left(y\right)=\bigcap_{i\in I}{\rho_i\left(y\right)}$ for
all $y\in Y$. Note first that $\left(\psi,Y\right)$ is non-null
owing to the fact that $Supp \left(\psi,Y,
\Gamma\right)=\bigcup_{i\in I}Supp \left(\rho_i,W_i,
\Gamma\right)$. Suppose that $y\in Supp \left(\psi,Y,
\Gamma\right)$. Then $\psi\left(y\right)\neq \emptyset$ so we have
$\rho_{i_0}\neq\emptyset$ for some $i_0\in I\left(y\right)$. From
the hypothesis $\left\{W_i: i\in I\right\}$ are pairwise disjoint,
we follow that $\varphi\left(y\right)=\rho_{i_0}\left(y\right)$.
On the other hand $\rho_{i_0}\left(y\right)$ is a soft
$\Gamma$-semiring over $S$, we conclude that $\left(\psi,Y\right)$
is a soft $\Gamma$-semiring over $S$ for all $y\in
\left(\psi,Y,\Gamma\right)$. Consequently
$\left(\widetilde{\cup}_\E\right)_{i\in
I}\left(\rho_i,W_i,\Gamma\right)= \left(\psi,Y, \Gamma\right)$ is
a soft $\Gamma$-semiring over S.
\end{proof}

\begin{theorem}
If $\left(\rho,W,\Gamma\right)$ and $\left(\sigma,Y,\Gamma\right)$
be two soft $\Gamma$-semirings over $\Gamma$-semiring $S$, then
$\left(\rho,W,\Gamma\right)
\widetilde{\Lambda}\left(\sigma,Y,\Gamma\right)$ is a soft
$\Gamma$-semiring over S if it is non-null.
\end{theorem}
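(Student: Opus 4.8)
The plan is to reduce this to the same lemma that underlies Proposition 3.4, namely that the intersection of two sub-$\Gamma$-semirings is again a sub-$\Gamma$-semiring whenever it is nonempty. First I would unwind Definition 2.7(i) to write $(\rho,W,\Gamma)\widetilde{\Lambda}(\sigma,Y,\Gamma)=(\psi,W\times Y,\Gamma)$, where $\psi(\omega,y)=\rho(\omega)\cap\sigma(y)$ for every $(\omega,y)\in W\times Y$. Both $\rho(\omega)$ and $\sigma(y)$ are subsets of the common $\Gamma$-semiring $S$, so the intersection is meaningful. Assuming the $\Lambda$-intersection is non-null, I fix an arbitrary $(\omega,y)\in Supp(\psi,W\times Y,\Gamma)$, so that $\psi(\omega,y)=\rho(\omega)\cap\sigma(y)\neq\emptyset$.

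Next I would observe that $\rho(\omega)\cap\sigma(y)\neq\emptyset$ forces both $\rho(\omega)\neq\emptyset$ and $\sigma(y)\neq\emptyset$; hence $\omega\in Supp(\rho,W,\Gamma)$ and $y\in Supp(\sigma,Y,\Gamma)$. Since $(\rho,W,\Gamma)$ and $(\sigma,Y,\Gamma)$ are soft $\Gamma$-semirings, Definition 3.2 gives that $\rho(\omega)$ and $\sigma(y)$ are each sub-$\Gamma$-semirings of $S$.

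The core step is to verify the closure of $\rho(\omega)\cap\sigma(y)$ under the two operations demanded by Definition 3.1. For $a,b\in\rho(\omega)\cap\sigma(y)$ the sum $a+b$ lies in each of $\rho(\omega)$ and $\sigma(y)$, each being a subsemigroup of $(S,+)$, hence in the intersection; likewise $a\alpha b$ lies in both for every $\alpha\in\Gamma$, hence in the intersection. Thus $(\rho(\omega)\cap\sigma(y),+)$ is a subsemigroup of $(S,+)$ and the $\Gamma$-multiplication stays inside, so $\psi(\omega,y)$ is a sub-$\Gamma$-semiring of $S$. As $(\omega,y)$ was an arbitrary point of the support, Definition 3.2 yields that $(\psi,W\times Y,\Gamma)$ is a soft $\Gamma$-semiring over $S$.

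I do not expect a genuine obstacle here: the argument is structurally identical to Proposition 3.4, the only change being that the two set-valued maps are now indexed by the distinct parameter sets $W$ and $Y$ and are evaluated at independent arguments $\omega$ and $y$ rather than at a common $\omega$. The one point worth stating carefully is that non-nullity of the product is precisely what licenses passing to a support point at which both factors are nonempty, so that the intersection-of-sub-$\Gamma$-semirings computation applies; without that hypothesis the empty intersection would violate the nonemptiness clause of Definition 3.1.
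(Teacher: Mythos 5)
Your proposal is correct and follows essentially the same route as the paper's own proof of Theorem 3.11: both reduce the claim to the fact that a nonempty intersection of two sub-$\Gamma$-semirings of $S$ is again a sub-$\Gamma$-semiring, verified by closure under $+$ and under $a\alpha b$ at an arbitrary support point. If anything, your version is the cleaner execution, since the paper detours through the defining relation $R$ with an unnecessary $t_1,t_2$ computation and writes the parameter set as $W\times\Gamma\times Y$ rather than the $W\times Y$ of Definition 2.7, whereas you stay consistent with the stated definition.
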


\begin{proof}
Using definition 2.7 (i) , we have that
$\left(\rho,W,\Gamma\right)
\widetilde{\Lambda}_\E\left(\sigma,Y,\Gamma\right)=\left(\psi,Z,\Gamma\right)$,
where $Z=W \times \Gamma \times Y$ and
$\psi\left(\omega,\alpha,y\right)= \rho\left(\omega\right)\cap
\sigma\left(y\right)$ for all $\left(\omega,\alpha,y\right)\in Z=
W\times \Gamma \times Y$. Then by the hypothesis,
$\left(\psi,Z,\Gamma\right)$ is a nonnull soft set over
$\Gamma$-semiring $S$. Since $\left(\psi,Z,\Gamma\right)$ is a
nonnull, $Supp\left(\psi,Z,\Gamma\right)\neq \emptyset$ and so,
for $\left(\omega,\alpha,y\right)\in
Supp\left(\psi,Z,\Gamma\right), \psi\left(\omega,\alpha,y\right)=
\rho\left(\omega\right)\cap \sigma\left(y\right)\neq \emptyset$.
We assume that $t_1, t_2\in \rho\left(\omega\right)\cap
\sigma\left(y\right)$. In this position

\begin{itemize}
\item [i)]If $t_1, t_2\in\rho\left(\omega\right)=\left\{y: R\left(\omega,\alpha_1,y\right),\forall \alpha_1\in
\Gamma\right\}$ we have that $\omega\alpha_1t_1\in
W,\omega\alpha_1t_2\in W$. This implies
$\omega\alpha_1\left(t_1+t_2\right)\in W$ and

\item [ii)]$t_1, t_2\in\sigma\left(y\right)=\left\{y': R\left(y,\alpha_2,y'\right),\forall \alpha_2\in
\Gamma\right\}$ we have that $y\alpha_2t_1\in Y,y\alpha_2t_1\in
Y.$ This implies $y\alpha_2\left(t_1+t_2\right)\in Y$.

Hence $\rho\left(x\right)\cap\sigma\left(y\right)$ is a
sub-$\Gamma$ semiring. By definition of soft $\Gamma$ semiring,
$\left(\rho,W,\Gamma\right)$ and $\left(\sigma,Y,\Gamma\right)$
are both soft $\Gamma$ semirings over $S$. $\rho\left(x\right)$
and $\sigma\left(y\right)$ are also sub-$\Gamma$ semiring of $S$.
Furthermore
$\psi\left(\omega,\alpha,y\right)=\rho\left(\omega\right)\cap\sigma\left(y\right)$
is a sub $\Gamma$ semiring of $S$ for all
$\left(\omega,\alpha,y\right)\in
\left(\psi,Z,\Gamma\right)=\left(\rho,W,\Gamma\right)\widetilde{\wedge}\left(\sigma,Y,\Gamma\right)$
is a soft $\Gamma$ semiring over $S$ required.
\end{itemize}
\end{proof}

\begin{theorem}
Let $ \left\{\left(\rho_i,W_i,\Gamma\right):i\in I\right\}$ be a
nonempty family of soft $\Gamma$-semiring over $S$. Then
$\widetilde{\Lambda}_{i\in I}\left(\rho_i,W_i,\Gamma\right)$ is a
soft $\Gamma$-semiring over $S$ if it is non-null.
\end{theorem}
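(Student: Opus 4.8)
The plan is to run the same argument used for the preceding intersection theorems, adapted to the fact that here the index set of the resulting soft set is a Cartesian product. First I would invoke Definition 2.7(ii) to unfold the notation: write $\widetilde{\Lambda}_{i\in I}(\rho_i,W_i,\Gamma)=(\psi,Y,\Gamma)$, where $Y=\Pi_{i\in I}W_i$ and $\psi(y)=\bigcap_{i\in I}\rho_i(y_i)$ for every tuple $y=(y_i)_{i\in I}\in Y$. Since we assume the result is non-null, $Supp(\psi,Y,\Gamma)\neq\emptyset$, so I would fix an arbitrary $y=(y_i)_{i\in I}\in Supp(\psi,Y,\Gamma)$ and aim to show that $\psi(y)$ is a sub-$\Gamma$-semiring of $S$, which by Definition 3.2 is exactly what is required.

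For such a $y$ we have $\psi(y)=\bigcap_{i\in I}\rho_i(y_i)\neq\emptyset$, and since the intersection is nonempty each factor $\rho_i(y_i)$ must itself be nonempty. Because every $(\rho_i,W_i,\Gamma)$ is a soft $\Gamma$-semiring over $S$, Definition 3.2 then guarantees that each nonempty $\rho_i(y_i)$ is a sub-$\Gamma$-semiring of $S$. The core step, and the only point where the argument genuinely extends the binary $\widetilde{\Lambda}$-case already proved, is to check that an arbitrary intersection of sub-$\Gamma$-semirings is again a sub-$\Gamma$-semiring. I would verify the two closure conditions of Definition 3.1 directly: if $a,b\in\bigcap_{i\in I}\rho_i(y_i)$, then $a,b\in\rho_i(y_i)$ for every $i$; since each $\rho_i(y_i)$ is a subsemigroup of $(S,+)$ we get $a+b\in\rho_i(y_i)$ for every $i$, hence $a+b\in\psi(y)$; and for any $\alpha\in\Gamma$, each $\rho_i(y_i)$ being closed under the $\Gamma$-action gives $a\alpha b\in\rho_i(y_i)$ for every $i$, so $a\alpha b\in\psi(y)$. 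Together with nonemptiness, this shows $\psi(y)$ is a sub-$\Gamma$-semiring of $S$.

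Since $y\in Supp(\psi,Y,\Gamma)$ was arbitrary, $\psi(y)$ is a sub-$\Gamma$-semiring of $S$ for every $y$ in the support, and therefore $(\psi,Y,\Gamma)=\widetilde{\Lambda}_{i\in I}(\rho_i,W_i,\Gamma)$ is a soft $\Gamma$-semiring over $S$. I do not expect any substantive obstacle: the proof is structurally identical to the restricted- and extended-intersection theorems, with the closure arguments passing unchanged from finite to arbitrary intersections. The only points requiring care are bookkeeping ones, namely that the elements of $Y=\Pi_{i\in I}W_i$ are tuples $y=(y_i)_{i\in I}$ and that $\psi(y)$ must be read as $\bigcap_{i\in I}\rho_i(y_i)$ via the coordinates $y_i$, together with the observation that nonemptiness of the intersection forces nonemptiness of each factor so that Definition 3.2 applies to every $\rho_i(y_i)$.
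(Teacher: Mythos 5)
Your proposal is correct and follows essentially the same route as the paper's proof: unfold Definition 2.7(ii), fix $y=(y_i)_{i\in I}\in Supp(\psi,Y,\Gamma)$, note that each $\rho_i(y_i)$ is a nonempty sub-$\Gamma$-semiring of $S$, and conclude that $\psi(y)=\bigcap_{i\in I}\rho_i(y_i)$ is one too. The only difference is that you explicitly verify the closure of the intersection under $+$ and the $\Gamma$-product (and read the paper's slightly sloppy $\bigcap_{i\in I}\rho_i(y)$ correctly as $\bigcap_{i\in I}\rho_i(y_i)$), details the paper leaves implicit.
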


\begin{proof}
By taking into account to the definition 2.7 (ii) we write
$\widetilde{\wedge}_{i\in
I}\left(\rho_i,W_i,\Gamma\right)=\left(\psi,Y, \Gamma\right)$,
where $Y=\prod_{i\in I}W_i$, and $\psi\left(y\right)=\bigcap_{i\in
I}\rho_i \left(y\right)$ for all $y=\left(y_i\right)_{i\in I}\in
Y$.

Suppose that $\left(\psi,Y, \Gamma\right)$ is non-null. If
$y=\left(y_i\right)_{i\in I}\in Supp \left(\psi,Y, \Gamma\right)$,
then $\psi\left(y\right)\neq \emptyset$. Since
$\left(\rho_i,W_i,\Gamma\right)$ is a soft $\Gamma$-semiring over
$S$ for all $i\in I$ members of nonempty family
$\left(\left(\rho_i,W_i,\Gamma\right):i\in I\right)$ such that
$\rho_i \left(y_i\right)$ is a sub $\Gamma$-semiring of $S$. Hence
$\psi\left(y\right)$ is a sub $\Gamma$-semiring of $S$ for all
$y\in Supp \left(\psi,Y, \Gamma\right) $, and so
$\widetilde{\wedge}_{i\in
I}\left(\rho_i,W_i,\Gamma\right)=\left(\psi,Y, \Gamma\right)$ is
soft $\Gamma$-semiring over $S$.
\end{proof}

\begin{theorem}
Let $ \left\{\left(\rho_i,W_i,\Gamma\right):i\in I\right\}$ be a
nonempty family of soft $\Gamma$-semiring over $S$. If
$\rho_i\left (y_i\right)\subseteq \rho_j\left (y_j\right)$ or
$\rho_j\left (y_j\right) \subseteq \rho_i\left (y_i\right)$ for
all $i,j\in I,y_i\in W_i$, the $\vee$-union
$\widetilde{\vee}_{i\in I}\left(\rho_i,W_i,\Gamma\right)$ is a
soft $\Gamma$-semiring over $S$.
\end{theorem}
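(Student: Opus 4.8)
The plan is to follow exactly the template of the preceding restricted-union and $\Lambda$-intersection theorems, where the only genuine content is the verification that a union of a \emph{chain} of sub-$\Gamma$-semirings is again a sub-$\Gamma$-semiring. First I would unpack Definition 2.8(ii): writing $\widetilde{\vee}_{i\in I}(\rho_i,W_i,\Gamma)=(\psi,Y,\Gamma)$, we have $Y=\Pi_{i\in I}W_i$ and $\psi(y)=\bigcup_{i\in I}\rho_i(y_i)$ for every $y=(y_i)_{i\in I}\in Y$. Fix $y=(y_i)_{i\in I}\in Supp(\psi,Y,\Gamma)$, so that $\psi(y)\neq\emptyset$; this forces $\rho_{i_0}(y_{i_0})\neq\emptyset$ for at least one index $i_0\in I$, which shows in particular that the support is nonempty and the $\vee$-union is well posed.

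Next I would invoke the hypothesis that each $(\rho_i,W_i,\Gamma)$ is a soft $\Gamma$-semiring: by Definition 3.2, for every index $i$ with $\rho_i(y_i)\neq\emptyset$ the set $\rho_i(y_i)$ is a sub-$\Gamma$-semiring of $S$. The comparability assumption, namely $\rho_i(y_i)\subseteq\rho_j(y_j)$ or $\rho_j(y_j)\subseteq\rho_i(y_i)$ for all $i,j\in I$, says precisely that the nonempty members of $\{\rho_i(y_i):i\in I\}$ form a chain under inclusion. So the problem reduces to the single claim that the union of such a chain of sub-$\Gamma$-semirings of $S$ is itself a sub-$\Gamma$-semiring.

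For that claim I would argue pointwise. Take $a,b\in\psi(y)=\bigcup_{i\in I}\rho_i(y_i)$ and $\alpha\in\Gamma$; then $a\in\rho_i(y_i)$ and $b\in\rho_j(y_j)$ for some indices $i,j$. By comparability one of these is contained in the other, say $\rho_i(y_i)\subseteq\rho_j(y_j)$, so $a$ and $b$ both lie in the single sub-$\Gamma$-semiring $\rho_j(y_j)$. Closure of $\rho_j(y_j)$ under addition and under the product $a\alpha b$ then yields $a+b\in\rho_j(y_j)\subseteq\psi(y)$ and $a\alpha b\in\rho_j(y_j)\subseteq\psi(y)$. Hence $\psi(y)$ is a subsemigroup of $(S,+)$ that is closed under the $\Gamma$-multiplication, i.e. a sub-$\Gamma$-semiring of $S$, for every $y\in Supp(\psi,Y,\Gamma)$; therefore $\widetilde{\vee}_{i\in I}(\rho_i,W_i,\Gamma)=(\psi,Y,\Gamma)$ is a soft $\Gamma$-semiring over $S$.

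The one subtle point I expect to be the crux is this pull-into-one-member step. Without the chain hypothesis the union of two sub-$\Gamma$-semirings need not be closed under $+$ (just as a union of two subgroups is almost never a subgroup), so the comparability condition is exactly what lets me place the two arbitrary elements $a,b$ inside a single member of the family and thereby reduce both closure checks to the already-known sub-$\Gamma$-semiring structure of that member.
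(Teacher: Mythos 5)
Your proof is correct and takes essentially the same route as the paper's: unpack Definition 2.8(ii), restrict attention to $y=\left(y_i\right)_{i\in I}\in Supp\left(\psi,Y,\Gamma\right)$, and use the comparability hypothesis to conclude that $\psi\left(y\right)=\bigcup_{i\in I}\rho_i\left(y_i\right)$ is a sub-$\Gamma$-semiring of $S$. You actually do more than the paper, which dismisses the key closure step with the words ``by assumption,'' whereas you spell out the chain argument --- placing arbitrary $a,b$ inside a single member $\rho_j\left(y_j\right)$ and verifying closure under $+$ and under $a\alpha b$ there --- which is exactly the detail the paper leaves unproven.
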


\begin{proof}
Using the definition 2.8 (ii), we have that
$\widetilde{\vee}_{i\in
I}\left(\rho_i,W_i,\Gamma\right)=\left(\psi,Y, \Gamma\right)$
where $Y=\prod_{i\in I} A_i$, and
$\psi\left(y\right)=\bigcup_{i\in I}\rho_i\left(y\right)$ for all
$y=\left(y_i\right)_{i}\in I\in Y$.

Assume that $y=\left(y_i\right)_{i}\in I\in Supp \left(\psi,Y,
\Gamma\right)$. Then $\psi\left(y\right)\neq \emptyset$ and so we
have that $\rho_{i_0}\left(y\right)\neq \emptyset$ for some
$i_0\in I$. By assumption, $\bigcup_{i\in I}\rho_i\left(y\right)$
is a soft $\Gamma$-semiring of $S$ for all
$y=\left(y_i\right)_{i}\in I\in Supp \left(\psi,Y, \Gamma\right)$.
Consequently $\widetilde{\vee}_{i\in
I}\left(\rho_i,W_i,\Gamma\right)=\left(\psi,Y, \Gamma\right)$ is a
soft-$\Gamma$-semiring over $S$
\end{proof}

\begin{theorem}
Let $ \left\{\left(\rho_i,W_i,\Gamma\right):i\in I\right\}$ be a
nonempty family of soft $\Gamma$-semirings over $S_i$.Then
$\widetilde{\prod}_{i\in I}\left(\rho_i,W_i,\Gamma\right)$ is a
soft $\Gamma$-semiring over $\prod_{i\in I}S_i$.
\end{theorem}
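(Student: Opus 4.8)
The plan is to follow the template of the preceding family-indexed theorems, with one extra preliminary task: unlike all the earlier results, whose output soft sets live over the fixed common universe $S$, here the output lives over the index-product $\prod_{i\in I}S_i$, so I must first equip this product with a $\Gamma$-semiring structure before a sub-$\Gamma$-semiring verification even makes sense. First I would invoke Definition 2.9 (ii) to write $\widetilde{\prod}_{i\in I}(\rho_i,W_i,\Gamma)=(\psi,Y,\Gamma)$, where $Y=\prod_{i\in I}W_i$ and $\psi(y)=\prod_{i\in I}\rho_i(y_i)$ for every $y=(y_i)_{i\in I}\in Y$.

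Next I would verify that $\prod_{i\in I}S_i$ is a $\Gamma$-semiring. I endow it with coordinatewise addition $(a_i)_i+(b_i)_i=(a_i+b_i)_i$ and with the coordinatewise action $(a_i)_i\,\alpha\,(b_i)_i=(a_i\alpha b_i)_i$ over the same $\Gamma$. Each of the four axioms of Definition 2.14 then holds in the product precisely because it holds in every factor $S_i$; this is a routine coordinatewise check I would only indicate. I would also remark that, in contrast to the $\Lambda$-intersection theorems, no ``if it is non-null'' hypothesis is required here: each factor $(\rho_i,W_i,\Gamma)$ is by assumption a soft $\Gamma$-semiring, hence non-null, so picking $y_i\in Supp(\rho_i,W_i,\Gamma)$ for every $i$ yields a point $y=(y_i)_{i\in I}$ with $\psi(y)=\prod_{i\in I}\rho_i(y_i)\neq\emptyset$, whence $Supp(\psi,Y,\Gamma)\neq\emptyset$.

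For the main step I would take $y=(y_i)_{i\in I}\in Supp(\psi,Y,\Gamma)$, so that $\psi(y)\neq\emptyset$ forces $\rho_i(y_i)\neq\emptyset$ for every $i$; Definition 3.2 applied to each factor then makes every $\rho_i(y_i)$ a sub-$\Gamma$-semiring of $S_i$. It remains to show $\prod_{i\in I}\rho_i(y_i)$ is a sub-$\Gamma$-semiring of $\prod_{i\in I}S_i$ in the sense of Definition 3.1. Given $a=(a_i)_i$ and $b=(b_i)_i$ in $\prod_{i\in I}\rho_i(y_i)$, closure under addition holds because $a_i+b_i\in\rho_i(y_i)$ for each $i$ (each $\rho_i(y_i)$ being an additive subsemigroup), and closure under the action holds because $a_i\alpha b_i\in\rho_i(y_i)$ for every $\alpha\in\Gamma$; hence $a+b$ and $a\alpha b$ lie in $\prod_{i\in I}\rho_i(y_i)$. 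Thus $\psi(y)$ is a sub-$\Gamma$-semiring of $\prod_{i\in I}S_i$ for all $y\in Supp(\psi,Y,\Gamma)$, and the conclusion follows from Definition 3.2.

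The main obstacle is conceptual rather than computational: because the universe is now the product $\prod_{i\in I}S_i$ instead of the common $S$ of the earlier theorems, one must confirm that this product carries a well-defined $\Gamma$-semiring structure (with the coordinatewise action genuinely respecting the fixed $\Gamma$) before the rest of the argument applies. A secondary point deserving care is the nonemptiness of the possibly infinite product $\prod_{i\in I}\rho_i(y_i)$, which rests on each factor being nonempty and, for infinite $I$, on the axiom of choice.
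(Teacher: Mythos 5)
Your proof is correct and takes essentially the same route as the paper's: unpack Definition 2.9(ii) to get $(\psi,Y,\Gamma)$ with $Y=\prod_{i\in I}W_i$ and $\psi(y)=\prod_{i\in I}\rho_i(y_i)$, take $y=(y_i)_{i\in I}$ in the support so each $\rho_i(y_i)$ is a nonempty sub-$\Gamma$-semiring of $S_i$, and conclude that the product is a sub-$\Gamma$-semiring of $\prod_{i\in I}S_i$. The extra details you supply --- verifying the coordinatewise $\Gamma$-semiring structure on $\prod_{i\in I}S_i$, the explicit closure check, the automatic non-nullness, and the axiom-of-choice point for infinite $I$ --- are simply asserted or left implicit in the paper's proof, so your write-up is a more complete version of the same argument.
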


\begin{proof}
By definition 2.10 we write $\widetilde{\prod}_{i\in
I}\left(\rho_i,W_i,\Gamma\right)=\left(\psi,Y, \Gamma\right)$,
where $Y=\prod_{i\in I}W_i$, and $\psi\left(y\right)=\prod_{i\in
I}W_i \left(y\right)$ for all $y=\left(y_i\right)_{i\in I}\in Y$.

Let $y=\left(y_i\right)_{i\in I}\in Supp \left(\psi,Y,
\Gamma\right)$. Then $\psi\left(y\right)\neq \emptyset$, and so we
have $\rho_i\left(y_i\right)\neq \emptyset$ for all $i\in I$. By
taking into account,$ \left\{\left(\rho_i,W_i,\Gamma\right):i\in
I\right\}$ is a soft $\Gamma$-semiring over $S_i$ for all $i\in
I$, it follows that $\prod_{i\in I}\rho_i\left(y_i\right)$ is a
soft-$\Gamma$-semiring of $\prod_{i\in I}S_i$ for all
$y=\left(y_i\right)_{i\in I}\in Supp \left(\psi,Y,
\Gamma\right)$.Hence $\widetilde{\prod}_{i\in
I}\left(\rho_i,W_i,\Gamma\right)$ is a soft $\Gamma$-semiring over
$\prod_{i\in I} S_i$
\end{proof}

\begin{definition}
Let $\left(\rho,W,\Gamma\right)$ be soft $\Gamma$-semiring over
$S$.
\begin{itemize}
\item [i)] $\left(\rho,W,\Gamma\right)$ is called the trivial soft
$\Gamma$-semiring over $S$ if
$\rho\left(\omega\right)=\left\{0\right\}$ for all $\omega\in W$

\item [ii)] $\left(\rho,W,\Gamma\right)$ is called the whole soft
$\Gamma$-semiring over $S$ if $\rho\left(\omega\right)=S$ for all
$\omega\in W$
\end{itemize}
\end{definition}
\begin{definition}
Let $S$ and $S'$ be two  $\Gamma$-semiring and $f: S\rightarrow
S'$ a mapping of $\Gamma$-semiring. If $\left(\rho,W\right)$ and
$\left(\sigma,Y\right)$ are soft sets over $S$ and $S'$
respectively, then

\begin{itemize}
\item [i)]
$\left(f\left(\rho\right),W\right)$ is a soft set over $S'$ where

$$f\left(\rho\right): W\rightarrow P\left(S'\right)$$

$$f\left(\rho\right)\left(\omega\right)=f\left(\rho\left(w\right)\right)$$

for all $\omega\in W$.

\item [ii)]
$\left(f^{-1}\left(\sigma\right),Y\right)$ is a soft set over $S$
where
$$f^{-1}\left(\sigma\right): Y\rightarrow
P\left(S\right)$$
$$f^{-1}\left(\sigma\right)\left(y\right)=f^{-1}\left(\sigma\left(y\right)\right)$$ for all $y \in Y$.
\end{itemize}
\end{definition}

\begin{lemma}
Let $f:S\rightarrow S'$ be  an onto homomorphism of
$\Gamma$-semiring. The following statements can be given.
\begin{itemize}
\item [i)] $\left(\rho,W,\Gamma\right)$ be soft $\Gamma$-semiring over
$S$, then  $\left(f\left(\rho\right),W,\Gamma\right)$ is a soft
$\Gamma$-semiring over $S'$

\item [ii)]  $\left(\sigma,Y,\Gamma\right)$ be soft $\Gamma$-semiring over
$S$, then  $\left(f^{-1}\left(\sigma\right),Y,\Gamma\right)$ is a
soft $\Gamma$-semiring over $S$.
\end{itemize}
\end{lemma}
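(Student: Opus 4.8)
The plan is to check, for each part, the two defining requirements of a soft $\Gamma$-semiring from Definition 3.2: that the soft set in question is non-null, and that its value at every point of its support is a sub-$\Gamma$-semiring (in the sense of Definition 3.1) of the appropriate ambient $\Gamma$-semiring. Throughout I will use that $f$ is a $\Gamma$-semiring homomorphism, i.e. $f(a+b)=f(a)+f(b)$ and $f(a\alpha b)=f(a)\alpha f(b)$ for all $a,b$ and all $\alpha\in\Gamma$; here $\sigma$ is read as a soft set over $S'$, as in the preceding definition of $f^{-1}(\sigma)$.

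For part (i), first I would record that $\mathrm{Supp}(f(\rho),W)=\mathrm{Supp}(\rho,W)$: since $f(\rho)(\omega)=f(\rho(\omega))$ and $f$ sends the empty set to the empty set and a nonempty set to a nonempty set, $f(\rho(\omega))\neq\emptyset$ exactly when $\rho(\omega)\neq\emptyset$. As $(\rho,W,\Gamma)$ is non-null, so is $(f(\rho),W)$. Next I would fix $\omega\in\mathrm{Supp}(f(\rho),W)$; then $\omega\in\mathrm{Supp}(\rho,W)$, so $\rho(\omega)$ is a sub-$\Gamma$-semiring of $S$. To show $f(\rho(\omega))$ is a sub-$\Gamma$-semiring of $S'$, I would take $x',y'\in f(\rho(\omega))$, write $x'=f(x)$ and $y'=f(y)$ with $x,y\in\rho(\omega)$, and combine the closure of $\rho(\omega)$ with the homomorphism identities to get $x'+y'=f(x+y)\in f(\rho(\omega))$ and $x'\alpha y'=f(x\alpha y)\in f(\rho(\omega))$ for every $\alpha\in\Gamma$. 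Note this part needs no surjectivity.

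Part (ii) follows the same template for the preimage soft set $f^{-1}(\sigma)(y)=f^{-1}(\sigma(y))$, but here surjectivity of $f$ enters decisively. For non-nullness I would argue that $f^{-1}(\sigma(y))\neq\emptyset$ precisely when $\sigma(y)\neq\emptyset$: the nontrivial direction uses that $f$ is onto, so any element of the nonempty set $\sigma(y)$ admits at least one preimage. Hence $\mathrm{Supp}(f^{-1}(\sigma),Y)=\mathrm{Supp}(\sigma,Y)$ and the preimage soft set is non-null. Then, for $y$ in this support, $\sigma(y)$ is a sub-$\Gamma$-semiring of $S'$, and closure of $f^{-1}(\sigma(y))$ under $+$ and under each $\alpha\in\Gamma$ is obtained by pushing forward: if $x,z\in f^{-1}(\sigma(y))$ then $f(x),f(z)\in\sigma(y)$, whence $f(x+z)=f(x)+f(z)\in\sigma(y)$ and $f(x\alpha z)=f(x)\alpha f(z)\in\sigma(y)$, so both $x+z$ and $x\alpha z$ lie in $f^{-1}(\sigma(y))$.

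I expect no serious obstacle. The only points needing care are the two bookkeeping facts about supports, namely that image and preimage preserve non-emptiness of the fibre sets; in part (ii) the surjectivity of $f$ is exactly what guarantees that the preimage of a nonempty sub-$\Gamma$-semiring stays nonempty. Everything else reduces to applying the two homomorphism identities to the additive-closure and $\Gamma$-multiplicative-closure conditions of Definition 3.1.
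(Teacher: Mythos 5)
Your proof is correct and follows essentially the same route as the paper's: both parts reduce to the standard fact that homomorphic images and preimages of sub-$\Gamma$-semirings are again sub-$\Gamma$-semirings, checked pointwise on the support of the relevant soft set. You are somewhat more careful than the paper on the bookkeeping — you verify the support equalities explicitly (in particular that surjectivity of $f$ is what makes $\left(f^{-1}\left(\sigma\right),Y\right)$ non-null, a point the paper leaves implicit) and you correctly observe that part (i) needs no surjectivity and that $\sigma$ must be read as a soft set over $S'$ — but the underlying argument is the same.
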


\begin{proof}
\begin{itemize}
\item [i)] Since $\left(\rho,W,\Gamma\right)$  is a soft $\Gamma$-semiring over
$S$, it is clear that $\left(f\left(\rho\right),W\right)$ is a
non-null soft set over $S'$. For every $y\in Supp
\left(f\left(\rho\right),W,\Gamma\right)$  we have
$f\left(\rho\right)\left(y\right)=f\left(\rho\left(y\right)\right)\neq
\emptyset$. Hence $f\left(\rho\left(y\right)\right)$ which is the
onto homomorphic image of  $\Gamma$-semiring $\rho\left(y\right)$
is a $\Gamma$-semiring of $S'$ for all $y\in Supp
\left(\rho\left(f\right),W,\Gamma\right)$. That is
$\left(f\left(\rho\right),W,\Gamma\right)$  is a soft
$\Gamma$-semiring of $S'$.

\item [ii)]It is easy to see that $ Supp\left(f^{-1}\left(\sigma\right),Y,\Gamma\right)\subseteq Supp
\left(\sigma,Y,\Gamma\right)$. By this way let $y\in Supp
\left(f^{-1}\left(\sigma\right),Y,\Gamma\right)$. Then
$\sigma\left(y\right)\neq \emptyset$. Hence
$f^{-1}\left(\sigma\left(y\right)\right)$ which is homomorphic
inverse image of $\Gamma$-semiring $\sigma\left(y\right)$, is a
soft $\Gamma$-semiring over $S$ for all $y\in Y$.
\end{itemize}
\end{proof}

\begin{theorem}
Let $f:S\rightarrow S'$ be a homomorphism of $\Gamma$-semiring.
Let $\left(\rho,W,\Gamma\right)$ and
$\left(\sigma,Y,\Gamma\right)$ be two soft $\Gamma$-semiring over
$S$ and $S'$, respectively. Then the following sare given.
\begin{itemize}

\item [i)] If $\rho\left(\omega\right)=ker \left(f\right)$ for all $\omega\in W$,
then $\left(f\left(\rho\right),W, \Gamma\right)$ is the trivial
soft $\Gamma$-semiring over $S'$.

\item [ii)]If $f$ is onto and $\left(\rho,W\right)$ is whole, then $\left(f\left(\rho\right),W, \Gamma\right)$ is
the whole soft $\Gamma$-semiring over $S'$.

\item [iii)]If $\sigma\left(y\right)=f \left(S\right)$ for all $y\in Y$, then
$\left(f^{-1}\left(\sigma\right),Y, \Gamma\right)$ is the whole
soft $\Gamma$-semiring over $S$.

\item [iv)]If f is injective and $\left(\sigma,Y\right)$ is trivial, then
$\left(f^{-1}\left(\sigma\right),Y, \Gamma\right)$ is the trivial
soft $\Gamma$-semiring over $S$.
\end{itemize}
\end{theorem}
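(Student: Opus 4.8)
The plan is to treat the four claims separately, in each case unwinding the definition of the image soft set $(f(\rho),W)$ or the pre-image soft set $(f^{-1}(\sigma),Y)$, computing the value of that soft set on an arbitrary parameter, and comparing the resulting subset of $S'$ (resp.\ $S$) with the defining condition for a \emph{trivial} soft $\Gamma$-semiring ($\rho(\omega)=\{0\}$) or a \emph{whole} soft $\Gamma$-semiring ($\rho(\omega)=S$). All four parts reduce to a single set-level identity about $f$, so the work is elementary once the correct identity is isolated in each case.

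For (i) I would compute, for every $\omega\in W$, that $f(\rho)(\omega)=f(\rho(\omega))=f(\ker f)$. Since $0\in\ker f$ the set $\ker f$ is nonempty, and by definition of the kernel every element of $\ker f$ is sent to $0$; hence $f(\ker f)=\{0\}$. Thus $f(\rho)(\omega)=\{0\}$ for all $\omega$, which is exactly the trivial soft $\Gamma$-semiring over $S'$. For (ii), wholeness of $(\rho,W)$ gives $\rho(\omega)=S$, so $f(\rho)(\omega)=f(S)$; surjectivity of $f$ yields $f(S)=S'$, and therefore $f(\rho)(\omega)=S'$ for every $\omega$, i.e.\ $(f(\rho),W,\Gamma)$ is whole.

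For (iii) the hypothesis $\sigma(y)=f(S)$ gives $f^{-1}(\sigma)(y)=f^{-1}(\sigma(y))=f^{-1}(f(S))$. The key (and only non-formal) step here is the identity $f^{-1}(f(S))=S$: the inclusion $S\subseteq f^{-1}(f(S))$ holds because every $s\in S$ satisfies $f(s)\in f(S)$, while $f^{-1}(f(S))\subseteq S$ is automatic since the pre-image lives in the domain $S$. Hence $f^{-1}(\sigma)(y)=S$ for all $y$ and the pre-image soft set is whole. For (iv), triviality of $(\sigma,Y)$ gives $\sigma(y)=\{0\}$, so $f^{-1}(\sigma)(y)=f^{-1}(\{0\})=\ker f$; injectivity of $f$, together with $f(0)=0$, forces $\ker f=\{0\}$, whence $f^{-1}(\sigma)(y)=\{0\}$ for every $y$ and the soft set is trivial.

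I expect no serious obstacle, as each part is essentially a one-line set computation. The only points requiring a moment of care are the two identities $f(\ker f)=\{0\}$, which needs $\ker f\neq\emptyset$ (guaranteed by $0\in\ker f$), and $\ker f=\{0\}$ under injectivity, which needs the homomorphism to satisfy $f(0)=0$ so that the injective pre-image of the single point $0$ is exactly $\{0\}$. Once these are recorded, direct comparison with the trivial/whole definitions closes every case.
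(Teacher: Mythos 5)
Your proposal is correct and follows essentially the same route as the paper's own proof: in each part you compute the image or pre-image soft set pointwise and reduce to the identities $f(\ker f)=\{0_{S'}\}$, $f(S)=S'$, $f^{-1}(f(S))=S$, and $f^{-1}(\{0\})=\ker f=\{0_S\}$, exactly as the paper does. The only cosmetic difference is that the paper explicitly invokes its Lemma 3.16 and Definition 3.14 to conclude that the resulting soft set is a (trivial or whole) soft $\Gamma$-semiring, whereas you verify the relevant set identities directly, even supplying the small inclusion arguments the paper leaves implicit.
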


\begin{proof}
\begin{itemize}
\item [i)] By using $\rho\left(\omega\right)=ker \left(f\right)$ for all $y\in W$. Then $f\left(\rho\right)\left(\omega\right)= f \left(\rho\left(\omega\right)\right)=\left\{0_{S'}\right\}$
for all $\omega\in W$. Hence $\left(f\left(\rho\right),W,
\Gamma\right)$ is soft $\Gamma$-semiring over $S'$ by Lemma 3.16
and Definition 3.14.

\item [ii)] Suppose that $f$ is onto and  $\left(\rho,W\right)$ is
whole. Then $\rho\left(\omega\right)=S$ for all $\omega\in W$, and
so $f\left(\rho\right)\left(\omega\right)= f
\left(\rho\left(\omega\right)\right)=f \left(S\right)=S'$ for all
$\omega\in W$. It follows from Lemma 3.16 and Definition 3.14 that
$\left(f\left(\rho\right),W\right)$ is the whole soft
$\Gamma$-semiring over $S'$.

\item [iii)]If we use hypothesis $\sigma\left(y\right)=f\left(S\right)$ for all $y\in
Y$, we can write $f^{-1}\left(\sigma\right)\left(y\right)= f^{-1}
\left(\sigma\left(y\right)\right)=f^{-1}
\left(f\left(S\right)\right)=S$ for all $y\in Y$. It is clear
that, $\left(f^{-1}\left(\sigma\right),B,\Gamma\right)$ is the
whole soft $\Gamma$-semiring over $S$ by Lemma 3.16 and Definition
3.14.

\item [iv)]Suppose that f is injective and $\left(\sigma,Y\right)$ is trivial.
Then, $\sigma\left(y\right)=\left\{0\right\}$ for all $y\in Y$, so
$f^{-1}\left(\sigma\right)\left(y\right)=f^{-1}
\left(\sigma\left(y\right)\right)=f^{-1}\left(\left\{0\right\}\right)=ker
f=\left\{0_S\right\}$ for all $y\in Y $. It follows from Lemma
3.16 and Definition 3.14 that
$\left(f^{-1}\left(\sigma\right),Y,\Gamma\right)$ is the trivial
soft $\Gamma$-semiring over $S$.
\end{itemize}
\end{proof}

\section{Soft Sub $\Gamma$-Semiring }\label{Sec3}

\begin{definition}
Let $\left(\rho,W,\Gamma\right)$  and
$\left(\sigma,Y,\Gamma\right)$ be two soft $\Gamma$-semirings over
$S$. Then the soft $\Gamma$-semiring is called a soft sub
$\Gamma$-semiring of $\left(\rho,W,\Gamma\right)$, denoted by
$\left(\sigma,Y,\Gamma\right)\subset_{\Gamma_s}\left(\rho,W,\Gamma\right)$,
if it satisfies the following conditions

\item [i)] $Y\subseteq W$,

\item [ii)] $\sigma\left(y\right)$ is a sub $\Gamma$-Semiring of
$\rho\left(y\right)$ for all $y\in Supp
\left(\sigma,Y,\Gamma\right)$.

From the above definition, it is easily deduced that if
$\left(\sigma,Y,\Gamma\right)$ is a soft sub $\Gamma$-Semiring of
$\left(\rho,W,\Gamma\right)$, then $Supp
\left(\sigma,Y,\Gamma\right)\subset
Supp\left(\rho,W,\Gamma\right)$.
\end{definition}
\begin{theorem}
Let $\left(\rho,W,\Gamma\right)$ and
$\left(\sigma,Y,\Gamma\right)$ be two soft $\Gamma$-semirings over
$S$ and
$\left(\rho,W,\Gamma\right)\widetilde{\subseteq}\left(\sigma,Y,\Gamma\right)$.
Then
$\left(\sigma,Y,\Gamma\right)\subset_{\Gamma_s}\left(\rho,W,\Gamma\right)$,
\end{theorem}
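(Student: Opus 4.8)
The plan is to verify, directly from Definition 4.1, the two conditions defining $\left(\sigma,Y,\Gamma\right)\subset_{\Gamma_s}\left(\rho,W,\Gamma\right)$: namely (i) the index inclusion $Y\subseteq W$, and (ii) that $\sigma\left(y\right)$ is a sub $\Gamma$-semiring of $\rho\left(y\right)$ for every $y\in Supp\left(\sigma,Y,\Gamma\right)$. The single piece of data available is the soft-subset hypothesis, so the opening step is to unfold it through Definition 2.2 (i): $\left(\rho,W,\Gamma\right)\widetilde{\subseteq}\left(\sigma,Y,\Gamma\right)$ yields $W\subseteq Y$ together with $\rho\left(\omega\right)\subseteq\sigma\left(\omega\right)$ for every $\omega\in W$.

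For condition (i) I would aim to establish $Y\subseteq W$. Since the hypothesis already supplies $W\subseteq Y$, the route is to force the reverse inclusion as well, so that the two index sets coincide; once $W=Y$ is secured, condition (i) is immediate and the two soft $\Gamma$-semirings live over a common domain. For condition (ii) I would exploit that $\left(\rho,W,\Gamma\right)$ and $\left(\sigma,Y,\Gamma\right)$ are themselves soft $\Gamma$-semirings over $S$: by Definition 3.2 each nonempty $\rho\left(y\right)$ and each nonempty $\sigma\left(y\right)$ is already a sub $\Gamma$-semiring of $S$, so closure under $+$ and under the product $a\alpha b$ is automatic on both sides. What then remains in (ii) is only the plain set containment $\sigma\left(y\right)\subseteq\rho\left(y\right)$ on the support, after which the sub $\Gamma$-semiring structure of $\sigma\left(y\right)$ inside $\rho\left(y\right)$ is inherited for free.

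The genuine obstacle, and the step I expect to dominate the argument, is reconciling the orientation of the inclusions. Definition 2.2 (i) furnishes $W\subseteq Y$ and $\rho\left(\omega\right)\subseteq\sigma\left(\omega\right)$, whereas Definition 4.1 demands precisely the opposite, $Y\subseteq W$ and $\sigma\left(y\right)\subseteq\rho\left(y\right)$. The only way to hold both orientations at once is for the inclusions to be equalities, i.e.\ for the hypothesis to strengthen to soft equality $\left(\rho,W,\Gamma\right)=\left(\sigma,Y,\Gamma\right)$ in the sense of Definition 2.2 (ii); then $W=Y$, each $\sigma\left(y\right)=\rho\left(y\right)$, and both conditions hold because a $\Gamma$-semiring is trivially a sub $\Gamma$-semiring of itself. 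I therefore expect the crux of the proof to be extracting this equality from the antisymmetry built into the soft-subset relation, and it is this reconciliation of directions — rather than any closure computation — where the real content of the statement resides.
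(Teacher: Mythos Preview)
You have correctly spotted that the orientations delivered by Definition~2.2\,(i) and those demanded by Definition~4.1 point in opposite directions. However, your proposed resolution has a genuine gap: you plan to ``extract equality from the antisymmetry built into the soft-subset relation,'' but antisymmetry (Definition~2.2\,(ii)) requires \emph{both} inclusions $\left(\rho,W,\Gamma\right)\widetilde{\subseteq}\left(\sigma,Y,\Gamma\right)$ and $\left(\sigma,Y,\Gamma\right)\widetilde{\subseteq}\left(\rho,W,\Gamma\right)$, whereas the hypothesis supplies only the first. From $W\subseteq Y$ alone you cannot force $Y\subseteq W$, and from $\rho\left(\omega\right)\subseteq\sigma\left(\omega\right)$ alone you cannot force $\sigma\left(\omega\right)\subseteq\rho\left(\omega\right)$. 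So the argument you sketch simply does not go through; as literally stated, the theorem is not provable.

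The paper's own proof is just ``Straightforward,'' which is a signal that the statement contains a typographical slip: the intended conclusion is $\left(\rho,W,\Gamma\right)\subset_{\Gamma_s}\left(\sigma,Y,\Gamma\right)$, with the roles in the conclusion reversed. Under that reading the hypothesis $W\subseteq Y$ gives condition~(i) of Definition~4.1 directly, and for condition~(ii) one takes $y\in Supp\left(\rho,W,\Gamma\right)$, notes that $\rho\left(y\right)$ is a nonempty sub $\Gamma$-semiring of $S$ by Definition~3.2, and uses $\rho\left(y\right)\subseteq\sigma\left(y\right)$ to conclude that $\rho\left(y\right)$ is a sub $\Gamma$-semiring of $\sigma\left(y\right)$. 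That is indeed straightforward, and it is what the paper has in mind. Your instinct that ``the real content'' lies in reconciling opposite directions is therefore misplaced: there is no reconciliation to perform, only a misprint to correct.
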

\begin{proof}
Straightforward.
\end{proof}

\begin{theorem}
Let $\left(\rho,W,\Gamma\right)$ and
$\left(\sigma,Y,\Gamma\right)$ be two soft $\Gamma$-semirings over
$S$ and
$\left(\rho,W,\Gamma\right)\widetilde{\sqcap}\left(\sigma,Y,\Gamma\right)$
is a soft sub $\Gamma$ semiring of both
$\left(\rho,W,\Gamma\right)$ and $\left(\sigma,Y,\Gamma\right)$ if
it is non-null.
\end{theorem}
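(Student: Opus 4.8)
The plan is to unfold the operation $\widetilde{\sqcap}$ as the restricted intersection, so that $\left(\rho,W,\Gamma\right)\widetilde{\sqcap}\left(\sigma,Y,\Gamma\right)=\left(\psi,Z,\Gamma\right)$ with $Z=W\cap Y$ and $\psi\left(z\right)=\rho\left(z\right)\cap\sigma\left(z\right)$ for every $z\in Z$. Since the pair is assumed non-null, the earlier Proposition on the restricted intersection of soft $\Gamma$-semirings already guarantees that $\left(\psi,Z,\Gamma\right)$ is itself a soft $\Gamma$-semiring over $S$; in particular $\psi\left(z\right)$ is a sub-$\Gamma$-semiring of $S$ for every $z\in Supp\left(\psi,Z,\Gamma\right)$. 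This fact will be reused directly, so the only remaining work is to check the two defining conditions of a soft sub $\Gamma$-semiring against each of $\left(\rho,W,\Gamma\right)$ and $\left(\sigma,Y,\Gamma\right)$.

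First I would verify the domain condition (i). Because $Z=W\cap Y$, we have $Z\subseteq W$ and $Z\subseteq Y$ immediately, so the inclusion of index sets holds for both targets. Next I would record the support inclusion: if $z\in Supp\left(\psi,Z,\Gamma\right)$ then $\psi\left(z\right)=\rho\left(z\right)\cap\sigma\left(z\right)\neq\emptyset$, which forces $\rho\left(z\right)\neq\emptyset$ and $\sigma\left(z\right)\neq\emptyset$; hence $Supp\left(\psi,Z,\Gamma\right)\subseteq Supp\left(\rho,W,\Gamma\right)$ and $Supp\left(\psi,Z,\Gamma\right)\subseteq Supp\left(\sigma,Y,\Gamma\right)$. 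This ensures that at every point where condition (ii) must be tested, both $\rho\left(z\right)$ and $\sigma\left(z\right)$ are genuine sub-$\Gamma$-semirings of $S$.

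The core step is condition (ii): for each $z\in Supp\left(\psi,Z,\Gamma\right)$ I must show that $\psi\left(z\right)$ is a sub-$\Gamma$-semiring of $\rho\left(z\right)$ and, symmetrically, of $\sigma\left(z\right)$. Here I would combine two facts: on the one hand $\psi\left(z\right)=\rho\left(z\right)\cap\sigma\left(z\right)\subseteq\rho\left(z\right)$, and on the other hand $\psi\left(z\right)$ is already known to be a nonempty sub-$\Gamma$-semiring of $S$, i.e. $\left(\psi\left(z\right),+\right)$ is a subsemigroup and $a\alpha b\in\psi\left(z\right)$ for all $a,b\in\psi\left(z\right)$ and $\alpha\in\Gamma$. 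Since these closure properties are expressed purely in terms of the operations of $S$ and $\psi\left(z\right)$ sits inside $\rho\left(z\right)$, it follows at once that $\psi\left(z\right)$ is a sub-$\Gamma$-semiring of $\rho\left(z\right)$; the argument for $\sigma\left(z\right)$ is identical.

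I expect the main, and essentially the only, subtlety to be this transitivity observation: that being a sub-$\Gamma$-semiring of the ambient $S$, together with the set containment $\psi\left(z\right)\subseteq\rho\left(z\right)$, yields a sub-$\Gamma$-semiring of $\rho\left(z\right)$. This is immediate from the definition of sub-$\Gamma$-semiring, which refers only to the additive closure and the $\Gamma$-multiplicative closure and not to the ambient object, so no extra computation is required. Assembling condition (i), the support inclusion, and condition (ii) then yields $\left(\psi,Z,\Gamma\right)\subset_{\Gamma_s}\left(\rho,W,\Gamma\right)$ and likewise $\left(\psi,Z,\Gamma\right)\subset_{\Gamma_s}\left(\sigma,Y,\Gamma\right)$, which is the assertion.
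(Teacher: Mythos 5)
Your proof is correct and supplies exactly the argument the paper omits (its own proof of this theorem reads only ``Straightforward''): you unfold $\widetilde{\sqcap}$ as the restricted intersection $\left(\psi,W\cap Y,\Gamma\right)$ with $\psi\left(z\right)=\rho\left(z\right)\cap\sigma\left(z\right)$, which is the standard reading of this symbol even though the paper never defines it, and the transitivity observation --- that a nonempty subset closed under $+$ and the $\Gamma$-products, once contained in $\rho\left(z\right)$, is a sub-$\Gamma$-semiring of $\rho\left(z\right)$ --- is the only real content and you handle it correctly. One minor citation point: since $W$ and $Y$ may differ, the fact that the restricted intersection is a soft $\Gamma$-semiring should be drawn from Theorem 3.6 (which allows distinct parameter sets $W_i$) rather than from Proposition 3.4, which is stated only for a common parameter set; this does not affect your argument.
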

\begin{proof}
Straightforward.
\end{proof}
\begin{theorem}
Let $\left(\rho,W,\Gamma\right)$ be soft $\Gamma$-semiring over
$S$ and $\left\{\left(\psi_i,W_i,\Gamma\right): i\in I\right\}$ be
nonempty family of soft sub $\Gamma$-semirings of
$\left(\rho,W,\Gamma\right)$. Then the restricted intersection
$\left(\widetilde{\cap}_\Re\right)_{i\in
I}\left(\psi_i,W_i,\Gamma\right)$ is a soft sub $\Gamma$-semiring
of $\left(\rho,W,\Gamma\right)$ if it is non-null.
\end{theorem}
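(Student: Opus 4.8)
The plan is to unwind the defining formula for the restricted intersection and then check, one after another, the two clauses of the definition of a soft sub $\Gamma$-semiring (Definition 4.1). By Definition 2.3(ii) I would first write $\left(\widetilde{\cap}_\Re\right)_{i\in I}\left(\psi_i,W_i,\Gamma\right)=\left(\psi,Y,\Gamma\right)$, where $Y=\bigcap_{i\in I}W_i\neq\emptyset$ and $\psi\left(y\right)=\bigcap_{i\in I}\psi_i\left(y\right)$ for every $y\in Y$. Since each $\left(\psi_i,W_i,\Gamma\right)$ is a soft sub $\Gamma$-semiring of $\left(\rho,W,\Gamma\right)$, it is in particular a soft $\Gamma$-semiring over $S$; hence, assuming the intersection is non-null, Theorem 3.6 already guarantees that $\left(\psi,Y,\Gamma\right)$ is a soft $\Gamma$-semiring over $S$, so it is a legitimate candidate to be a soft sub $\Gamma$-semiring of $\left(\rho,W,\Gamma\right)$.

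For clause (i) I would observe that each $W_i\subseteq W$ (by the definition of soft sub $\Gamma$-semiring applied to $\left(\psi_i,W_i,\Gamma\right)$), whence $Y=\bigcap_{i\in I}W_i\subseteq W$; this is immediate.

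The content is in clause (ii). I would fix $y\in Supp\left(\psi,Y,\Gamma\right)$, so that $\psi\left(y\right)=\bigcap_{i\in I}\psi_i\left(y\right)\neq\emptyset$. In particular every $\psi_i\left(y\right)$ is nonempty, so $y\in Supp\left(\psi_i,W_i,\Gamma\right)$ for each $i$, and clause (ii) for each $\left(\psi_i,W_i,\Gamma\right)$ tells us that $\psi_i\left(y\right)$ is a sub $\Gamma$-semiring of $\rho\left(y\right)$. It then suffices to show that the intersection of a family of sub $\Gamma$-semirings of $\rho\left(y\right)$ is again a sub $\Gamma$-semiring of $\rho\left(y\right)$: given $a,b\in\bigcap_{i\in I}\psi_i\left(y\right)$ and $\alpha\in\Gamma$, each $\psi_i\left(y\right)$ is closed under $+$ and under the product $a\alpha b$, so $a+b$ and $a\alpha b$ lie in every $\psi_i\left(y\right)$ and hence in the intersection; as the intersection is nonempty and contained in $\rho\left(y\right)$, it is a sub $\Gamma$-semiring of $\rho\left(y\right)$. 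This gives that $\psi\left(y\right)$ is a sub $\Gamma$-semiring of $\rho\left(y\right)$ for all $y\in Supp\left(\psi,Y,\Gamma\right)$.

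I expect the closure-under-intersection computation to be the only substantive step, and it is essentially routine, parallel to the argument used in Theorem 3.6 except that the ambient semiring is $\rho\left(y\right)$ rather than $S$; the one point to keep track of is that nonemptiness of the full intersection is what licenses applying clause (ii) to each index simultaneously. Combining clauses (i) and (ii) yields $\left(\widetilde{\cap}_\Re\right)_{i\in I}\left(\psi_i,W_i,\Gamma\right)\subset_{\Gamma_s}\left(\rho,W,\Gamma\right)$, as required.
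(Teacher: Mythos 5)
Your proof is correct and follows essentially the same route as the paper, which simply says ``Similar to the proof of Theorem 3.6'': unwind Definition 2.3(ii), take $y\in Supp\left(\psi,Y,\Gamma\right)$, deduce $\psi_i\left(y\right)\neq\emptyset$ for every $i$, and conclude that the intersection is a sub $\Gamma$-semiring of $\rho\left(y\right)$. In fact you supply details the paper leaves implicit --- the parameter-set inclusion $Y=\bigcap_{i\in I}W_i\subseteq W$ for clause (i) and the explicit closure-under-$+$ and under-$\alpha$-products check for the intersection --- so your write-up is, if anything, more complete than the source.
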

\begin{proof}
Similar to the proof of Theorem 3.6.
\end{proof}

\begin{corollary}
Let $\left(\rho,W,\Gamma\right)$ be soft $\Gamma$-semiring over
$S$ and $\left\{\left(\psi_i,W,\Gamma\right): i\in I\right\}$ be
nonempty family of soft sub $\Gamma$-semirings of
$\left(\rho,W,\Gamma\right)$. Then
$\left(\widetilde{\cap}_\Re\right)_{i\in
I}\left(\psi_i,W,\Gamma\right)$ is a soft sub $\Gamma$-semiring of
$\left(\rho,W,\Gamma\right)$ if it is non-null.
\end{corollary}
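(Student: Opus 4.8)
The plan is to recognize this corollary as the specialization of the immediately preceding Theorem 4.4 to the case in which every member $W_i$ of the index family is the fixed set $W$, and then to supply the short direct verification, which ultimately rests on the elementary fact that sub $\Gamma$-semirings are stable under arbitrary intersection.

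First I would unwind the restricted intersection using Definition 2.3(ii): writing $(\widetilde{\cap}_\Re)_{i\in I}(\psi_i,W,\Gamma)=(\varphi,Y,\Gamma)$, the hypothesis that all index sets equal $W$ gives $Y=\bigcap_{i\in I}W=W$ and $\varphi(y)=\bigcap_{i\in I}\psi_i(y)$ for every $y\in W$. Condition (i) in the definition of soft sub $\Gamma$-semiring (Definition 4.1) is then immediate, since $Y=W\subseteq W$.

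Next, assuming the restricted intersection is non-null, I would fix $y\in Supp(\varphi,W,\Gamma)$, so that $\varphi(y)=\bigcap_{i\in I}\psi_i(y)\neq\emptyset$; this forces $\psi_i(y)\neq\emptyset$, hence $y\in Supp(\psi_i,W,\Gamma)$, for every $i\in I$. Since each $(\psi_i,W,\Gamma)$ is a soft sub $\Gamma$-semiring of $(\rho,W,\Gamma)$, each $\psi_i(y)$ is a sub $\Gamma$-semiring of $\rho(y)$, in particular $\psi_i(y)\subseteq\rho(y)$. The one substantive step is to check that the intersection $\varphi(y)=\bigcap_{i\in I}\psi_i(y)$ is again a sub $\Gamma$-semiring of $\rho(y)$: it is contained in $\rho(y)$, and for $a,b\in\varphi(y)$ and $\alpha\in\Gamma$, every $\psi_i(y)$ contains $a$ and $b$ and is closed under $+$ and under the ternary product of Definition 3.1, so $a+b\in\psi_i(y)$ and $a\alpha b\in\psi_i(y)$ for all $i$, whence both lie in $\varphi(y)$. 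Thus $\varphi(y)$ is a sub $\Gamma$-semiring of $\rho(y)$ for every $y\in Supp(\varphi,W,\Gamma)$, which is precisely condition (ii) of Definition 4.1, and the corollary follows.

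I do not expect any genuine obstacle: the statement is exactly the $W_i=W$ case of Theorem 4.4, and the only ingredient beyond that specialization, namely closure of sub $\Gamma$-semirings under arbitrary intersection, is the routine check just described, carried out directly from the defining closure conditions of Definition 3.1.
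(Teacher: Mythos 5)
Your proposal is correct and matches the paper's intent: the paper dismisses this corollary as ``Straightforward,'' since it is exactly the $W_i=W$ specialization of the preceding theorem on restricted intersections (itself proved like Theorem 3.6), and your direct verification --- $Y=\bigcap_{i\in I}W=W$, nonemptiness of $\varphi(y)$ forcing each $\psi_i(y)\neq\emptyset$, and closure of $\bigcap_{i\in I}\psi_i(y)$ under $+$ and the ternary product inside $\rho(y)$ --- is precisely the routine argument being elided. No gaps; your proof simply makes explicit what the paper leaves implicit.
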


\begin{proof}
Straightforward.
\end{proof}

\begin{theorem}
Let $\left(\rho,W,\Gamma\right)$ be soft $\Gamma$-semiring over
$S$ and $\left\{\left(\psi_i,W_i,\Gamma\right): i\in I\right\}$ be
nonempty family of soft sub $\Gamma$-semirings of
$\left(\rho,W,\Gamma\right)$. Then the extended intersection
$\left(\widetilde{\cap}_E\right)_{i\in
I}\left(\psi_i,W_i,\Gamma\right)$ is a soft sub $\Gamma$-semiring
of $\left(\rho,W,\Gamma\right)$.
\end{theorem}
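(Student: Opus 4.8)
The plan is to check the two defining conditions of Definition 4.1 directly for the extended intersection $\left(\widetilde{\cap}_E\right)_{i\in I}\left(\psi_i,W_i,\Gamma\right)=\left(\psi,Y,\Gamma\right)$, where by Definition 2.4 (ii) one has $Y=\bigcup_{i\in I}W_i$ and $\psi\left(y\right)=\bigcap_{i\in I\left(y\right)}\psi_i\left(y\right)$ with $I\left(y\right)=\left\{i:y\in W_i\right\}$. The whole argument runs parallel to the proof of Theorem 3.7; the only genuinely new point is that everything must now be verified relative to $\rho\left(y\right)$ rather than merely to $S$.

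First I would dispose of the inclusion condition. Since each $\left(\psi_i,W_i,\Gamma\right)$ is a soft sub $\Gamma$-semiring of $\left(\rho,W,\Gamma\right)$, Definition 4.1 (i) gives $W_i\subseteq W$ for every $i\in I$, and therefore $Y=\bigcup_{i\in I}W_i\subseteq W$. In particular $\rho\left(y\right)$ is defined for every $y\in Y$, which is what makes the relativized second condition meaningful.

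Next, for the sub $\Gamma$-semiring condition, I would fix $y\in Supp\left(\psi,Y,\Gamma\right)$, so that $\psi\left(y\right)=\bigcap_{i\in I\left(y\right)}\psi_i\left(y\right)\neq\emptyset$. The key observation is that a nonempty intersection forces each factor to be nonempty: $\psi_i\left(y\right)\neq\emptyset$ for every $i\in I\left(y\right)$, that is, $y\in Supp\left(\psi_i,W_i,\Gamma\right)$ for those indices. By the hypothesis $\left(\psi_i,W_i,\Gamma\right)\subset_{\Gamma_s}\left(\rho,W,\Gamma\right)$, Definition 4.1 (ii) then yields that each $\psi_i\left(y\right)$ is a sub $\Gamma$-semiring of $\rho\left(y\right)$.

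Finally I would invoke the standard fact that an arbitrary intersection of sub $\Gamma$-semirings of a fixed $\Gamma$-semiring $\rho\left(y\right)$ is again a sub $\Gamma$-semiring of $\rho\left(y\right)$: the intersection is contained in $\rho\left(y\right)$, is nonempty by assumption, and is closed under addition and under the $\Gamma$-multiplication because every factor $\psi_i\left(y\right)$ is. Hence $\psi\left(y\right)$ is a sub $\Gamma$-semiring of $\rho\left(y\right)$ for all $y\in Supp\left(\psi,Y,\Gamma\right)$, which establishes condition (ii) and completes the proof. I do not anticipate any real obstacle here; the mild subtlety is simply to carry out the closure verification inside $\rho\left(y\right)$ instead of inside $S$, which is immediate once each $\psi_i\left(y\right)$ is known to sit as a sub $\Gamma$-semiring of $\rho\left(y\right)$.
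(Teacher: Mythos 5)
Your proof is correct and takes essentially the same route as the paper, which disposes of this theorem by declaring it ``Similar to the proof of Theorem 3.7'': form the extended intersection via Definition 2.4(ii), check $Y=\bigcup_{i\in I}W_i\subseteq W$, and observe that on the support each $\psi_i\left(y\right)$ is a sub $\Gamma$-semiring of $\rho\left(y\right)$, so their intersection is too. If anything you are more careful than the paper's model argument, since you correctly restrict the intersection to $I\left(y\right)=\left\{i: y\in W_i\right\}$ and verify $y\in Supp\left(\psi_i,W_i,\Gamma\right)$ before invoking Definition 4.1(ii), a point Theorem 3.7's proof glosses over.
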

\begin{proof}
Similar to the proof of Theorem 3.7.
\end{proof}
\begin{theorem}
Let $\left(\rho,W,\Gamma\right)$ be soft $\Gamma$-semiring over
$S$ and $\left\{\left(\psi_i,W_i,\Gamma\right): i\in I\right\}$ be
nonempty family of soft sub $\Gamma$-semirings of
$\left(\rho,W,\Gamma\right)$. If $\psi_i\left(y_i\right)\subseteq
\psi_j\left(y_j\right)$ or $\psi_j\left(y_j\right)\subseteq
\psi_i\left(y_i\right)$ for all $i,j\in I, y_i\in W_i$, then the
restricted union $\left(\widetilde{\cup}_\Re\right)_{i\in
I}\left(\psi_i,W_i,\Gamma\right)$ is a soft sub $\Gamma$-semiring
of $\left(\rho,W,\Gamma\right)$.
\end{theorem}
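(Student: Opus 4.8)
The plan is to apply Definition 4.1 directly to the restricted union, verifying its two defining conditions in turn. By Definition 2.5(ii) I would first write $\left(\widetilde{\cup}_\Re\right)_{i\in I}\left(\psi_i,W_i,\Gamma\right)=\left(\varphi,Y,\Gamma\right)$, where $Y=\bigcap_{i\in I}W_i$ and $\varphi\left(y\right)=\bigcup_{i\in I}\psi_i\left(y\right)$ for all $y\in Y$. Condition (i) of Definition 4.1 is immediate: since each $\left(\psi_i,W_i,\Gamma\right)$ is a soft sub $\Gamma$-semiring of $\left(\rho,W,\Gamma\right)$, Definition 4.1(i) gives $W_i\subseteq W$ for every $i\in I$, and as $I\neq\emptyset$ this yields $Y=\bigcap_{i\in I}W_i\subseteq W$.

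Next I would verify condition (ii). Fix $y\in Supp\left(\varphi,Y,\Gamma\right)$, so that $\varphi\left(y\right)=\bigcup_{i\in I}\psi_i\left(y\right)\neq\emptyset$. For each index $i$ with $\psi_i\left(y\right)\neq\emptyset$, the assumption that $\left(\psi_i,W_i,\Gamma\right)$ is a soft sub $\Gamma$-semiring of $\left(\rho,W,\Gamma\right)$ gives, through Definition 4.1(ii), that $\psi_i\left(y\right)$ is a sub $\Gamma$-semiring of $\rho\left(y\right)$; in particular $\psi_i\left(y\right)\subseteq\rho\left(y\right)$, whence $\varphi\left(y\right)\subseteq\rho\left(y\right)$, and $\rho\left(y\right)\neq\emptyset$ places $y$ in $Supp\left(\rho,W,\Gamma\right)$ where $\rho\left(y\right)$ is a genuine sub $\Gamma$-semiring of $S$. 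The crux is to show that $\varphi\left(y\right)$ is itself a sub $\Gamma$-semiring of $\rho\left(y\right)$, and this is exactly where the comparability hypothesis enters. Taking $y_i=y_j=y\in Y\subseteq W_i\cap W_j$ in the assumption, the family $\left\{\psi_i\left(y\right):i\in I\right\}$ is totally ordered by inclusion. Given $a,b\in\varphi\left(y\right)$, say $a\in\psi_{i_1}\left(y\right)$ and $b\in\psi_{i_2}\left(y\right)$, comparability lets me assume $\psi_{i_1}\left(y\right)\subseteq\psi_{i_2}\left(y\right)$, so both $a,b$ lie in the single sub $\Gamma$-semiring $\psi_{i_2}\left(y\right)$; hence $a+b\in\psi_{i_2}\left(y\right)\subseteq\varphi\left(y\right)$ and $a\alpha b\in\psi_{i_2}\left(y\right)\subseteq\varphi\left(y\right)$ for every $\alpha\in\Gamma$. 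Thus $\varphi\left(y\right)$ is closed under the additive semigroup operation and under the $\Gamma$-action, so it is a sub $\Gamma$-semiring of $\rho\left(y\right)$ for every $y\in Supp\left(\varphi,Y,\Gamma\right)$.

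Combining $Y\subseteq W$ with this closure property, Definition 4.1 concludes that $\left(\widetilde{\cup}_\Re\right)_{i\in I}\left(\psi_i,W_i,\Gamma\right)$ is a soft sub $\Gamma$-semiring of $\left(\rho,W,\Gamma\right)$; that the union is already a soft $\Gamma$-semiring over $S$ is the content of Theorem 3.8, whose hypothesis is precisely the present chain condition. The main obstacle is the closure argument for the union, which genuinely fails without comparability: when the $\psi_i\left(y\right)$ are incomparable one cannot guarantee that a sum or a $\Gamma$-product of elements drawn from different members lands back in the union. I would therefore be careful to invoke the hypothesis with the common argument $y$ rather than distinct $y_i,y_j$, which is what reduces any pair of elements into one member of the chain.
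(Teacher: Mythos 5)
Your proof is correct and follows the same overall route as the paper's: write the restricted union as $\left(\varphi,Y,\Gamma\right)$ with $Y=\bigcap_{i\in I}W_i$ and $\varphi\left(y\right)=\bigcup_{i\in I}\psi_i\left(y\right)$, then use the comparability hypothesis to show each $\varphi\left(y\right)$ is a sub $\Gamma$-semiring of $\rho\left(y\right)$. Your write-up is in fact more careful than the paper's in three respects. First, the paper's proof mis-cites Definition 2.6(ii) (the extended union, writing $Y=\bigcup_{i\in I}W_i$) even though the theorem concerns the restricted union; you correctly invoke Definition 2.5(ii) with $Y=\bigcap_{i\in I}W_i$, which is also what makes your step (i), $Y\subseteq W$, immediate. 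Second, the paper dismisses the key closure step with ``clearly,'' whereas you carry out the actual argument: specializing the hypothesis to $y_i=y_j=y$ (legitimate since $y\in Y\subseteq W_i$ for all $i$) makes $\left\{\psi_i\left(y\right):i\in I\right\}$ totally ordered, so any two elements of the union lie in a single $\psi_{i_2}\left(y\right)$ and their sum and $\Gamma$-products stay in the union --- exactly the point where the theorem would fail without comparability. Third, you note via Theorem 3.8 that the union is itself a soft $\Gamma$-semiring over $S$, a prerequisite of Definition 4.1 that the paper leaves implicit. So: same approach, but yours fills genuine gaps and corrects a wrong citation in the paper's own proof.
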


\begin{proof}
By the aid of the definition 2.6 (ii), we  write
$\left(\widetilde{\cup}_\E\right)_{i\in
I}\left(\psi_i,W_i,\Gamma\right)=\left(\psi,Y,\Gamma\right)$,
where $Y=\bigcup_{i\in I}W_i$, and
$\psi\left(y\right)=\bigcup_{i\in I}\psi_i\left(y\right)$ for all
$y\in Y$.

Let $y\in Supp \left(\psi,Y,\Gamma\right) $. Then
$\psi\left(y\right)\neq \emptyset$, and so we have
$\psi_{i_0}\left(y_{i_0}\right)\neq \emptyset$ for some $i_0 \in
I$.From the hypothesis, we know that
$\psi_i\left(y_i\right)\subseteq \psi_j\left(y_j\right)$ or
$\psi_j\left(y_j\right)\subseteq \psi_i\left(y_i\right)$ for all
$i,j\in I, y_i\in W_i$, clearly $\bigcup_{i\in
I}\psi_i\left(y\right)$ is a sub $\Gamma$- semiring of
$\rho\left(y\right)$ for all $y\in Supp
\left(\psi,Y,\Gamma\right)$. Thus
$\left(\widetilde{\cup}_\Re\right)_{i\in
I}\left(\psi_i,W_i,\Gamma\right)=\left(\psi,Y,\Gamma\right)$ is a
soft sub $\Gamma$-semiring of $\left(\rho,W,\Gamma\right)$.
\end{proof}

\begin{theorem}
Let $\left(\rho,W,\Gamma\right)$ be soft $\Gamma$-semiring  over
$S$ and $\left\{\left(\psi_i,W_i,\Gamma\right): i\in I\right\}$ be
nonempty family of soft sub $\Gamma$-semiring of
$\left(\rho,W,\Gamma\right)$. If $\psi_i\left(y_i\right)\subseteq
\psi_j\left(y_j\right)$ or $\psi_j\left(y_j\right)\subseteq
\psi_i\left(y_i\right)$ for all $i,j\in I, y_i\in W_i$,then $\vee$
union $\widetilde{\vee}_{i\in I}\left(\psi_i,W_i,\Gamma\right)$ is
a soft sub $\Gamma$-semiring of $\widetilde{\vee}_{i\in
I}\left(\rho,W,\Gamma\right)$.
\end{theorem}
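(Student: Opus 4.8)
The plan is to fuse the two results this statement combines, namely Theorem 3.12 (the $\vee$-union of soft $\Gamma$-semirings) and Theorem 4.7 (the restricted union of soft sub $\Gamma$-semirings), and then to verify the two clauses of Definition 4.1. First I would unfold both $\vee$-unions by Definition 2.8 (ii). Writing $\widetilde{\vee}_{i\in I}\left(\psi_i,W_i,\Gamma\right)=\left(\psi,Y,\Gamma\right)$, we have $Y=\prod_{i\in I}W_i$ and $\psi\left(y\right)=\bigcup_{i\in I}\psi_i\left(y_i\right)$ for every $y=\left(y_i\right)_{i\in I}\in Y$; writing $\widetilde{\vee}_{i\in I}\left(\rho,W,\Gamma\right)=\left(\chi,Z,\Gamma\right)$, we have $Z=\prod_{i\in I}W$ and $\chi\left(y\right)=\bigcup_{i\in I}\rho\left(y_i\right)$.

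Next I would verify clause (i) of Definition 4.1. Since each $\left(\psi_i,W_i,\Gamma\right)$ is a soft sub $\Gamma$-semiring of $\left(\rho,W,\Gamma\right)$, we have $W_i\subseteq W$ for every $i\in I$, whence $Y=\prod_{i\in I}W_i\subseteq\prod_{i\in I}W=Z$; thus the domain of the left-hand soft set is contained in that of the right-hand one. For clause (ii) I would fix $y=\left(y_i\right)_{i\in I}\in Supp\left(\psi,Y,\Gamma\right)$, so that $\psi\left(y\right)=\bigcup_{i\in I}\psi_i\left(y_i\right)\neq\emptyset$, and establish two things: that $\psi\left(y\right)\subseteq\chi\left(y\right)$, and that $\psi\left(y\right)$ is closed under $+$ and under the $\Gamma$-multiplication. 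The containment is immediate, because $\left(\psi_i,W_i,\Gamma\right)\subset_{\Gamma_s}\left(\rho,W,\Gamma\right)$ gives $\psi_i\left(y_i\right)\subseteq\rho\left(y_i\right)$ for each $i$, and taking unions yields $\bigcup_{i\in I}\psi_i\left(y_i\right)\subseteq\bigcup_{i\in I}\rho\left(y_i\right)$.

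The closure is where the comparability hypothesis enters, exactly as in Theorem 4.7. Given $t_1,t_2\in\bigcup_{i\in I}\psi_i\left(y_i\right)$, say $t_1\in\psi_{i_1}\left(y_{i_1}\right)$ and $t_2\in\psi_{i_2}\left(y_{i_2}\right)$, the assumption $\psi_{i_1}\left(y_{i_1}\right)\subseteq\psi_{i_2}\left(y_{i_2}\right)$ or $\psi_{i_2}\left(y_{i_2}\right)\subseteq\psi_{i_1}\left(y_{i_1}\right)$ places both $t_1$ and $t_2$ inside a single member $\psi_{i_0}\left(y_{i_0}\right)$ of the family. As each $\psi_{i_0}\left(y_{i_0}\right)$ is a sub $\Gamma$-semiring of $\rho\left(y_{i_0}\right)$, and hence of $S$, we get $t_1+t_2\in\psi_{i_0}\left(y_{i_0}\right)$ and $t_1\alpha t_2\in\psi_{i_0}\left(y_{i_0}\right)$ for all $\alpha\in\Gamma$, so both land back in the union. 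Therefore $\psi\left(y\right)$ is a sub $\Gamma$-semiring of $S$ contained in $\chi\left(y\right)$, that is, a sub $\Gamma$-semiring of $\chi\left(y\right)$, and both clauses of Definition 4.1 hold for every $y\in Supp\left(\psi,Y,\Gamma\right)$.

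I expect the main obstacle to be the directedness step: the comparability hypothesis is stated uniformly over all $i,j\in I$ and all $y_i\in W_i$, and some care is needed to deduce from it that any two chosen elements of the union genuinely sit in one common member of the chain, since without this a union of sub $\Gamma$-semirings need not be closed. A subsidiary point worth flagging is that the right-hand object $\widetilde{\vee}_{i\in I}\left(\rho,W,\Gamma\right)$ must itself be a soft $\Gamma$-semiring for the relation $\subset_{\Gamma_s}$ of Definition 4.1 even to apply; this is supplied by Theorem 3.12, which requires the analogous comparability of the sets $\rho\left(y_i\right)$, so one reads (or should add) that hypothesis to guarantee that $\chi\left(y\right)=\bigcup_{i\in I}\rho\left(y_i\right)$ is a sub $\Gamma$-semiring of $S$ and that the containment $\psi\left(y\right)\subseteq\chi\left(y\right)$ is genuinely a containment of sub $\Gamma$-semirings.
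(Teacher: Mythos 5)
Your proposal is correct and follows essentially the same route as the paper, whose proof of this theorem is literally ``Similar to the proof of Theorem 3.12'': unfold the $\vee$-union via Definition 2.8 (ii) and use the comparability hypothesis to place any two elements of $\bigcup_{i\in I}\psi_i\left(y_i\right)$ in a single member, so that the union of the chain is again a sub $\Gamma$-semiring --- which is exactly what you write out, together with the routine verification of clause (i) of Definition 4.1 that the paper leaves implicit. Your closing caveat is also well taken: for $\subset_{\Gamma_s}$ to apply, $\widetilde{\vee}_{i\in I}\left(\rho,W,\Gamma\right)$ must itself be a soft $\Gamma$-semiring, which by Theorem 3.12 requires the analogous comparability of the sets $\rho\left(y_i\right)$, a hypothesis the statement omits and the paper's one-line proof glosses over.
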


\begin{proof}
Similar to the proof of Theorem 3.12.
\end{proof}

\begin{theorem}
Let $\left(\rho,W,\Gamma\right)$ be a soft $\Gamma$-semiring over
$S$ and $\left\{\left(\psi_i,W_i,\Gamma\right): i\in I\right\}$ be
nonempty family of soft sub $\Gamma$-semirings of
$\left(\rho,W,\Gamma\right)$. Then the $\wedge$ intersection
$\widetilde{\wedge}_{i\in I}\left(\psi_i,W_i,\Gamma\right)$ is a
soft sub $\Gamma$-semiring of $\widetilde{\wedge}_{i\in
I}\left(\rho,W,\Gamma\right)$.
\end{theorem}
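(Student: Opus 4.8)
The plan is to unwind both $\wedge$-intersections via Definition 2.7(ii), record that each is a soft $\Gamma$-semiring by the earlier theorem, and then verify the two defining conditions of Definition 4.1 directly. Writing $\widetilde{\wedge}_{i\in I}\left(\psi_i,W_i,\Gamma\right)=\left(\phi,Y,\Gamma\right)$ with $Y=\prod_{i\in I}W_i$ and $\phi\left(y\right)=\bigcap_{i\in I}\psi_i\left(y_i\right)$ for $y=\left(y_i\right)_{i\in I}\in Y$, and $\widetilde{\wedge}_{i\in I}\left(\rho,W,\Gamma\right)=\left(\theta,Z,\Gamma\right)$ with $Z=\prod_{i\in I}W$ and $\theta\left(y\right)=\bigcap_{i\in I}\rho\left(y_i\right)$, I note that Theorem 3.11 guarantees both $\left(\phi,Y,\Gamma\right)$ and $\left(\theta,Z,\Gamma\right)$ are soft $\Gamma$-semirings over $S$ (whenever they are non-null), so that Definition 4.1 is applicable.

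First I would establish condition (i), namely $Y\subseteq Z$. Since each $\left(\psi_i,W_i,\Gamma\right)$ is a soft sub $\Gamma$-semiring of $\left(\rho,W,\Gamma\right)$, Definition 4.1(i) gives $W_i\subseteq W$ for every $i\in I$; taking Cartesian products over $i\in I$ yields $Y=\prod_{i\in I}W_i\subseteq\prod_{i\in I}W=Z$, and in particular $\theta\left(y\right)=\bigcap_{i\in I}\rho\left(y_i\right)$ is well defined for every $y\in Y$.

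Next I would verify condition (ii). Let $y=\left(y_i\right)_{i\in I}\in Supp\left(\phi,Y,\Gamma\right)$, so that $\phi\left(y\right)=\bigcap_{i\in I}\psi_i\left(y_i\right)\neq\emptyset$. Because $\phi\left(y\right)\subseteq\psi_i\left(y_i\right)$ for every $i$, each $\psi_i\left(y_i\right)$ is nonempty, i.e. $y_i\in Supp\left(\psi_i,W_i,\Gamma\right)$, whence Definition 4.1(ii) says $\psi_i\left(y_i\right)$ is a sub $\Gamma$-semiring of $\rho\left(y_i\right)$; in particular $\psi_i\left(y_i\right)\subseteq\rho\left(y_i\right)$. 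Intersecting over $i$ gives $\phi\left(y\right)=\bigcap_{i\in I}\psi_i\left(y_i\right)\subseteq\bigcap_{i\in I}\rho\left(y_i\right)=\theta\left(y\right)$. It then remains to observe that $\phi\left(y\right)$ is itself a sub $\Gamma$-semiring of $S$: each $\psi_i\left(y_i\right)$ is a sub $\Gamma$-semiring of $S$, and since closure under $+$ and under the $\Gamma$-multiplication $a\alpha b$ is preserved by arbitrary intersections, the nonempty set $\phi\left(y\right)$ is closed under both operations. Being a nonempty subset of $\theta\left(y\right)$ that is closed under $+$ and the $\Gamma$-action, $\phi\left(y\right)$ is a sub $\Gamma$-semiring of $\theta\left(y\right)$, as required by Definition 4.1(ii). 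Combining the two conditions gives $\left(\phi,Y,\Gamma\right)\subset_{\Gamma_s}\left(\theta,Z,\Gamma\right)$.

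I expect the main obstacle to be bookkeeping rather than algebra: one must track the coordinatewise indexing carefully, matching $Y=\prod_{i\in I}W_i$ against $Z=\prod_{i\in I}W$ and deducing $\phi\left(y\right)\subseteq\theta\left(y\right)$ coordinate by coordinate from the inclusions $\psi_i\left(y_i\right)\subseteq\rho\left(y_i\right)$, while also checking that $y\in Supp\left(\phi,Y,\Gamma\right)$ forces $y_i\in Supp\left(\psi_i,W_i,\Gamma\right)$ for every $i$. The genuinely algebraic step, that an intersection of sub $\Gamma$-semirings is again a sub $\Gamma$-semiring, is routine, so the argument runs parallel to that of Theorem 3.11 with the extra inclusions supplied by Definition 4.1.
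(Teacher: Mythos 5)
Your proposal is correct and is essentially the paper's own argument: the paper's proof is literally ``Similar to the proof of Theorem 3.11,'' and what you write out is exactly that argument---unwinding Definition 2.7(ii) and using that a nonempty intersection of sub $\Gamma$-semirings is again a sub $\Gamma$-semiring---supplemented with the routine Definition 4.1 checks that the paper leaves implicit, namely $Y=\prod_{i\in I}W_i\subseteq\prod_{i\in I}W=Z$ and $\phi\left(y\right)\subseteq\theta\left(y\right)$ via $\psi_i\left(y_i\right)\subseteq\rho\left(y_i\right)$ on the support. Your parenthetical non-null caveat (which the theorem statement omits, though Theorem 3.11 includes it) is a sensible repair rather than a deviation.
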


\begin{proof}
Similar to the proof of Theorem 3.11.
\end{proof}

\begin{theorem}
Let $\left(\rho,W,\Gamma\right)$ be soft $\Gamma$-semiring over
$S$ and $\left\{\left(\psi_i,W_i,\Gamma\right): i\in I\right\}$ be
nonempty family of soft sub $\Gamma$-semirings of
$\left(\rho,W,\Gamma\right)$. Then the cartesian product of the
family $\widetilde{\prod}_{i\in I}\left(\psi_i,W_i,\Gamma\right)$
is a soft sub $\Gamma$-semiring of $\widetilde{\prod}_{i\in
I}\left(\rho,W,\Gamma\right)$.
\end{theorem}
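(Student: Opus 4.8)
The plan is to proceed exactly as in the proof of Theorem 3.13, first unfolding both cartesian products by Definition 2.10 and then verifying the two conditions in the definition of a soft sub $\Gamma$-semiring. Write $\widetilde{\prod}_{i\in I}(\psi_i,W_i,\Gamma)=(\Psi,Y,\Gamma)$, where $Y=\prod_{i\in I}W_i$ and $\Psi(y)=\prod_{i\in I}\psi_i(y_i)$ for $y=(y_i)_{i\in I}\in Y$, and likewise $\widetilde{\prod}_{i\in I}(\rho,W,\Gamma)=(P,Z,\Gamma)$, where $Z=\prod_{i\in I}W$ and $P(z)=\prod_{i\in I}\rho(z_i)$ for $z=(z_i)_{i\in I}\in Z$. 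By Theorem 3.13 both pairs are genuine soft $\Gamma$-semirings over the common universe $\prod_{i\in I}S$, so the relation ``$\subset_{\Gamma_s}$'' is meaningful here.

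For condition (i), I would invoke that each $(\psi_i,W_i,\Gamma)\subset_{\Gamma_s}(\rho,W,\Gamma)$ forces $W_i\subseteq W$; taking the product over $i\in I$ yields $Y=\prod_{i\in I}W_i\subseteq\prod_{i\in I}W=Z$. For condition (ii), let $y=(y_i)_{i\in I}\in Supp(\Psi,Y,\Gamma)$. Then $\Psi(y)=\prod_{i\in I}\psi_i(y_i)\neq\emptyset$, which forces $\psi_i(y_i)\neq\emptyset$, i.e. $y_i\in Supp(\psi_i,W_i,\Gamma)$, for every $i\in I$. Since each $(\psi_i,W_i,\Gamma)$ is a soft sub $\Gamma$-semiring of $(\rho,W,\Gamma)$, the set $\psi_i(y_i)$ is a sub $\Gamma$-semiring of $\rho(y_i)$ for every $i$.

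It then remains to conclude that $\Psi(y)=\prod_{i\in I}\psi_i(y_i)$ is a sub $\Gamma$-semiring of $P(y)=\prod_{i\in I}\rho(y_i)$, which finishes the proof since $y\in Supp(\Psi,Y,\Gamma)$ was arbitrary. The one step requiring genuine (if routine) verification, and hence the main obstacle, is this last closure claim: one must check that a coordinatewise product of sub $\Gamma$-semirings is closed under the componentwise addition and the componentwise $\Gamma$-action inherited from $\prod_{i\in I}S$, so that the containment $\prod_{i\in I}\psi_i(y_i)\subseteq\prod_{i\in I}\rho(y_i)$ is in fact a sub $\Gamma$-semiring inclusion. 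This is precisely the product argument already used in Theorem 3.13, carried out one level down inside $\rho(y_i)$ rather than inside $S_i$, so no new idea is needed.
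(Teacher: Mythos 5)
Your proposal is correct and takes essentially the same route as the paper's own proof: unfold the cartesian products via Definition 2.10, take $y=(y_i)_{i\in I}$ in the support so that each $\psi_i(y_i)\neq\emptyset$, invoke that each $\psi_i(y_i)$ is a sub $\Gamma$-semiring of $\rho(y_i)$, and conclude that $\prod_{i\in I}\psi_i(y_i)$ is a sub $\Gamma$-semiring of $\prod_{i\in I}\rho(y_i)$ under the componentwise operations, as in Theorem 3.13. If anything you are slightly more careful than the paper, which never checks the parameter-set containment $\prod_{i\in I}W_i\subseteq\prod_{i\in I}W$ and leaves the final componentwise-closure step implicit (its concluding sentence is truncated), both of which you state explicitly.
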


\begin{proof}
By Definition 2.10, we can write $\widetilde{\prod}_{i\in
I}\left(\psi_i,W_i,\Gamma\right)=\left(\psi,Y,\Gamma\right)$ where
$Y=\prod_{i\in I}W_i$ and $\psi\left(y\right)=\prod_{i\in
I}\psi_i\left(y_i\right) $ for all $y=\left(y_i\right)_{i\in I}\in
Y$. Let $y=\left(y_i\right)_{i\in I}\in Supp
\left(\psi,Y,\Gamma\right)$. Then $\psi\left(y\right)\neq
\emptyset$ and so we have $\psi_i\left(y_i\right)\neq \emptyset$
for all $i\in I$. In as much as
$\left\{\left(\psi_i,W_i,\Gamma\right): i\in I\right\}$ is a soft
sub $\Gamma$-semiring of $\left(\psi,W,\Gamma\right)$, we have
that $\psi_i\left(y_i\right)$ is a sub $\Gamma$-semiring of
$\rho\left(y_i\right)$. It follows that, we obtain  $\prod_{i\in
I}\psi_i\left(y_i\right)$ for all $y=\left(y_i\right)_{i\in I}\in
Supp \left(\psi,Y,\Gamma\right)$. Hence, the cartesian product of
the family $\widetilde{\prod}_{i\in
I}\left(\rho_i,W_i,\Gamma\right)$ is a soft sub $\Gamma$-semiring
of $\left(\rho,W,\Gamma\right)$.
\end{proof}

\begin{theorem}
Let $f:S\rightarrow S'$ be a homomorphism of $\Gamma$-semirings
and $\left(\rho,W,\Gamma\right)$ and
$\left(\sigma,Y,\Gamma\right)$ two soft $\Gamma$-semirings over
$S$. If
$\left(\sigma,Y,\Gamma\right)\subset_{\Gamma_S}\left(\rho,W,\Gamma\right)$
then $\left(f\left(\sigma\right),Y,\Gamma\right)
\subset_{\Gamma_S}\left(f\left(\rho\right), W,\Gamma\right)$.
\end{theorem}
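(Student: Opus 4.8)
The plan is to check, one at a time, the two conditions defining the relation $\subset_{\Gamma_S}$, reading off the image soft sets through the rules $f(\rho)(\omega)=f(\rho(\omega))$ and $f(\sigma)(y)=f(\sigma(y))$ and exploiting that $f$ is a $\Gamma$-semiring homomorphism.

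First I would settle the inclusion of parameter sets. The image soft sets $(f(\rho),W,\Gamma)$ and $(f(\sigma),Y,\Gamma)$ retain the parameter sets $W$ and $Y$, and the hypothesis $(\sigma,Y,\Gamma)\subset_{\Gamma_S}(\rho,W,\Gamma)$ already gives $Y\subseteq W$; so this condition transfers verbatim to the image side.

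Next I would match the supports. Since $f(\sigma)(y)=f(\sigma(y))$ is empty exactly when $\sigma(y)$ is empty, we obtain $Supp(f(\sigma),Y,\Gamma)=Supp(\sigma,Y,\Gamma)$. Fixing $y$ in this common support, the hypothesis supplies that $\sigma(y)$ is a sub $\Gamma$-semiring of $\rho(y)$; in particular $\sigma(y)\subseteq\rho(y)$, whence $f(\sigma(y))\subseteq f(\rho(y))$.

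The step I expect to carry the real content is the second condition: verifying that the homomorphic image $f(\sigma(y))$ is again a sub $\Gamma$-semiring of $f(\rho(y))$. For closure under addition I would take elements $f(a),f(b)\in f(\sigma(y))$ with $a,b\in\sigma(y)$ and invoke $f(a)+f(b)=f(a+b)$ together with $a+b\in\sigma(y)$ to land back in $f(\sigma(y))$; for closure under the ternary $\Gamma$-product I would use $f(a)\alpha f(b)=f(a\alpha b)$ with $a\alpha b\in\sigma(y)$ for each $\alpha\in\Gamma$. The only thing that makes this work is that both the additive and the $\Gamma$-multiplicative structures are preserved by $f$; once those two homomorphism identities are in hand the verification is purely mechanical, and the very same computation applied to $\rho(y)$ shows that $f(\rho(y))$ is itself a sub $\Gamma$-semiring of $S'$, so that the relation $\subset_{\Gamma_S}$ is meaningful on the image side. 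Combining the support identity, the inclusions, and the closure checks yields that $f(\sigma(y))$ is a sub $\Gamma$-semiring of $f(\rho(y))$ for every $y\in Supp(f(\sigma),Y,\Gamma)$, which together with $Y\subseteq W$ is exactly $(f(\sigma),Y,\Gamma)\subset_{\Gamma_S}(f(\rho),W,\Gamma)$, as required.
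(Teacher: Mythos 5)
Your proposal is correct and follows essentially the same route as the paper's proof: carry over $Y\subseteq W$ from the hypothesis, then use the homomorphism property of $f$ to conclude that $f(\sigma)(y)=f(\sigma(y))$ is a sub $\Gamma$-semiring of $f(\rho)(y)=f(\rho(y))$ on the support. The only difference is one of detail: the paper simply asserts that the homomorphic image of a sub $\Gamma$-semiring is a sub $\Gamma$-semiring, whereas you write out the element-wise closure checks $f(a)+f(b)=f(a+b)$ and $f(a)\alpha f(b)=f(a\alpha b)$ together with the support identity, which the paper leaves implicit.
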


\begin{proof}
Suppose that $y\in Supp \left(\sigma,Y, \Gamma\right)$. Then $y\in
Supp \left(\rho,W, \Gamma\right) $. By definition 4.1, we know
that $ Y\subseteq W$ and $\sigma\left(y\right)$ is a sub
$\Gamma$-semiring of $\rho\left(y\right)$ for all $y\in Supp
\left(\sigma,Y, \Gamma\right)$. From the expression hypothesis $f$
is a homomorphism, $f\left(\sigma\right)\left(y\right)=
f\left(\sigma\left(y\right)\right)$ is a sub $\Gamma$-semiring of
$
f\left(\rho\right)\left(y\right)=f\left(\rho\left(y\right)\right)$
and therefore $\left(f\left(\sigma\right),Y,
\Gamma\right)\subset_{\Gamma_S}\left(f\left(\rho\right),W,
\Gamma\right)$.
\end{proof}

\begin{theorem}
Let $f:S\rightarrow S'$ be a homomorphism of $\Gamma$-semiring and
$\left(\rho,W,\Gamma\right)$, $\left(\sigma,Y,\Gamma\right)$ two
soft $\Gamma$-semirings over $S$. If
$\left(\sigma,Y,\Gamma\right)\subset_{\Gamma_S}\left(\rho,W,\Gamma\right)$
then $\left(f^{-1}\left(\sigma\right),Y,\Gamma\right)
\subset_{\Gamma_S}\left(f^{-1}\left(\rho\right), W,\Gamma\right)$.
\end{theorem}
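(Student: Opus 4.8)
The plan is to mirror the proof of the preceding theorem (the $f$-image version), replacing the forward image $f$ everywhere by the preimage $f^{-1}$ and invoking Lemma 3.16(ii) in place of Lemma 3.16(i). I first record a reading of the hypotheses: for the preimages $f^{-1}(\sigma)$ and $f^{-1}(\rho)$ to be defined under $f:S\rightarrow S'$, one must regard $(\rho,W,\Gamma)$ and $(\sigma,Y,\Gamma)$ as soft $\Gamma$-semirings over the codomain $S'$, exactly as in Definition 3.15(ii) and Lemma 3.16(ii); with this reading $f^{-1}(\sigma)(y)=f^{-1}(\sigma(y))\subseteq S$ and $f^{-1}(\rho)(y)=f^{-1}(\rho(y))\subseteq S$, so the conclusion is a statement about soft sets over $S$.

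By Definition 4.1 the hypothesis $(\sigma,Y,\Gamma)\subset_{\Gamma_S}(\rho,W,\Gamma)$ furnishes two facts: $Y\subseteq W$, and $\sigma(y)$ is a sub $\Gamma$-semiring of $\rho(y)$ (hence in particular $\sigma(y)\subseteq\rho(y)$) for every $y\in Supp(\sigma,Y,\Gamma)$. Condition (i) of Definition 4.1 for the conclusion, namely $Y\subseteq W$, is then immediate, since passing to $f^{-1}(\sigma)$ leaves the index set $Y$ unchanged.

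The substance is condition (ii). I would fix $y\in Supp(f^{-1}(\sigma),Y,\Gamma)$, so that $f^{-1}(\sigma)(y)=f^{-1}(\sigma(y))\neq\emptyset$; this forces $\sigma(y)\neq\emptyset$, i.e. $y\in Supp(\sigma,Y,\Gamma)$, whence the hypothesis applies and gives $\sigma(y)\subseteq\rho(y)$ with $\sigma(y)$ a sub $\Gamma$-semiring of $S'$. Since $f$ is a $\Gamma$-semiring homomorphism, $f^{-1}(\sigma(y))$ and $f^{-1}(\rho(y))$ are sub $\Gamma$-semirings of $S$ (Lemma 3.16(ii)), and monotonicity of preimages yields $f^{-1}(\sigma(y))\subseteq f^{-1}(\rho(y))$ straight from $\sigma(y)\subseteq\rho(y)$. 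As both are sub $\Gamma$-semirings of $S$ with one contained in the other, $f^{-1}(\sigma(y))$ is a sub $\Gamma$-semiring of $f^{-1}(\rho(y))$, which is precisely condition (ii). This establishes $(f^{-1}(\sigma),Y,\Gamma)\subset_{\Gamma_S}(f^{-1}(\rho),W,\Gamma)$.

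The one point deserving care — the main obstacle, though a mild one — is checking that $f^{-1}(\sigma(y))$ is a sub $\Gamma$-semiring of $f^{-1}(\rho(y))$ and not merely of $S$. This splits into the inclusion $f^{-1}(\sigma(y))\subseteq f^{-1}(\rho(y))$, which is pure set theory, and the closure of $f^{-1}(\sigma(y))$ under $+$ and under the $\Gamma$-action, which rests on the homomorphism identities $f(a+b)=f(a)+f(b)$ and $f(a\alpha b)=f(a)\alpha f(b)$. Because the operations on $f^{-1}(\rho(y))$ are those inherited from $S$, being a sub $\Gamma$-semiring of $S$ while sitting inside $f^{-1}(\rho(y))$ coincides with being a sub $\Gamma$-semiring of $f^{-1}(\rho(y))$, so no separate verification is needed. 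I note finally that this preimage closure uses only that $f$ is a homomorphism and not that it is onto, which is consistent with the present theorem assuming merely a homomorphism even though Lemma 3.16 is phrased for onto $f$.
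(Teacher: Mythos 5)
Your proof takes essentially the same route as the paper's: fix $y\in Supp\left(f^{-1}\left(\sigma\right),Y,\Gamma\right)$, pull $Y\subseteq W$ and the sub $\Gamma$-semiring relation $\sigma\left(y\right)\subseteq\rho\left(y\right)$ from Definition 4.1, and use the homomorphism property of $f$ to conclude that $f^{-1}\left(\sigma\left(y\right)\right)$ is a sub $\Gamma$-semiring of $f^{-1}\left(\rho\left(y\right)\right)$. Your write-up is in fact more careful than the paper's own proof (which contains a typo asserting that $f^{-1}\left(\sigma\left(y\right)\right)$ is a sub $\Gamma$-semiring of $f^{-1}\left(\sigma\left(y\right)\right)=f\left(\sigma\right)\left(y\right)$ where $f^{-1}\left(\rho\left(y\right)\right)=f^{-1}\left(\rho\right)\left(y\right)$ is clearly intended), and your two side remarks --- that the soft sets must be read as living over $S'$ for the preimages to make sense, and that surjectivity of $f$ is not needed for preimage closure despite Lemma 3.16 being stated for onto maps --- are both correct.
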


\begin{proof}
Let $y\in Supp \left(f^{-1}\left(\sigma\right),Y, \Gamma\right)$.
$Y\subseteq W$ and $\sigma\left(y\right)$ is a sub
$\Gamma$-semiring of $\rho\left(y\right)$ for all $y\in Y$. Since
$f$ is a homomorphism, $f^{-1}\left(\sigma\right)\left(y\right)=
f^{-1}\left(\sigma\left(y\right)\right)$ is a sub
$\Gamma$-semiring of
$f^{-1}\left(\sigma\left(y\right)\right)=f\left(\sigma\right)\left(y\right)$
for all $y\in  Supp
\left(f^{-1}\left(\sigma\right),Y,\Gamma\right)$. Hence
$\left(f^{-1}\left(\sigma\right),Y,\Gamma\right)\subset_{\Gamma_S}\left(f^{-1}\left(\rho\right),
W,\Gamma\right)$
\end{proof}
\begin{definition}
Let $\left(\rho,W, \Gamma\right)$ and $\left(\sigma,Y,
\Gamma\right)$ be two soft $\Gamma$-semiring over $S$ and $S'$,
respectively. Let $f: S\rightarrow S'$ and $f: W\rightarrow Y$ be
two functions. The following conditions:
\begin{itemize}
\item [i)] $f$ is an epimorphism of $\Gamma$-semiring

\item [ii)] $g$ is and surjective mapping.

\item [ii)]
$f\left(\rho\left(y\right)\right)=\sigma\left(\rho\left(y\right)\right)$
for all $y\in W$.

were satisfied by the pair $\left(f,g\right)$, then
$\left(f,g\right)$ is called soft $\Gamma$- semiring homomorphism.
\end{itemize}

If there exists a soft $Gamma$-semiring homomorphism between
$\left(\rho, W, \Gamma\right)$ and $\left(\sigma, Y,
\Gamma\right)$,we say that $\left(\rho, W, \Gamma\right)$ is soft
homomorphic to $\left(\sigma, Y, \Gamma\right)$, and is denoted by
$\left(\rho, W, \Gamma\right)\sim_{\Gamma_s}\left(\sigma, Y,
\Gamma\right)$ is soft isomorphic to $\left(G, B, \Gamma\right)$,
which is denoted by $\left(\rho, W, \Gamma\right)\simeq
_{\Gamma_S}\left(\sigma, Y, \Gamma\right)$.
\end{definition}

\end{document}